\let\bbordermatrix\bordermatrix 
\patchcmd{\bbordermatrix}{8.75}{4.75}{}{}
\patchcmd{\bbordermatrix}{\left(}{\left[}{}{}
\patchcmd{\bbordermatrix}{\right)}{\right]}{}{}
\theoremstyle{plain}
\newtheorem{thm}{Theorem}
\newtheorem{lem}{Lemma}
\newtheorem{cor}{Corollary}
\def\R{\mathbb{R}}
\def\Z{\mathbb{Z}}
\def\e{\varepsilon}
\def\x{\times}
\def\f{\phi}
\def\a{\alpha}
\def\b{\beta}
\def\g{\gamma}
\def\o{\omega}
\def\ord{{\rm ord}}
\theoremstyle{remark}
\newtheorem*{defn}{Definition}
\newtheorem{question}{Question}
\newtheorem{example}{Example}
\newtheorem*{rem}{Remark}
\begin{document}

\title{Colorings, determinants and Alexander polynomials for spatial graphs}
\date{\today}

\author{Terry Kong}
\address{Stanford University}
\email{tckong@stanford.edu}
	
\author{Alec Lewald}
\address{Cal Poly Pomona}
\email{aleclewald88@yahoo.com}

\author{Blake Mellor}
\address{Mathematics Department\\
   	Loyola Marymount University\\
   	Los Angeles, CA  90045-2659}
\email{blake.mellor@lmu.edu}

\author{Vadim Pigrish}
\address{University of Pennsylvania}
\email{vpigrish@sas.upenn.edu}

\keywords{spatial graphs, p-colorings, Alexander polynomial}
\subjclass[2000]{05C10; 57M25}

\thanks{This research was supported in part by NSF grant DMS-0905687, and by the Summer Undergraduate Research Program at LMU}

\begin{abstract}
A {\em balanced} spatial graph has an integer weight on each edge, so that the directed sum of the weights at each vertex is zero.  We describe the Alexander module and polynomial for balanced spatial graphs (originally due to Kinoshita \cite{ki}), and examine their behavior under some common operations on the graph.  We use the Alexander module to define the determinant and $p$-colorings of a balanced spatial graph, and provide examples. We show that the determinant of a spatial graph determines for which $p$ the graph is $p$-colorable, and that a $p$-coloring of a graph corresponds to a representation of the fundamental group of its complement into a metacyclic group $\Gamma(p,m,k)$.  We finish by proving some properties of the Alexander polynomial.
\end{abstract}

\maketitle



\section{Introduction} \label{S:intro}

In the study of knots, there are close connections among the Alexander polynomial \cite{al} of a knot, the determinant of a knot (see \cite{li}, for example) and the $p$-colorings of a knot \cite{cf}.  The main purpose of this paper is to establish analogous connections among the corresponding invariants of a spatial graph.  Our spatial graphs will come equipped with both an orientation and an integer weight on each edge.

The Alexander module and Alexander polynomial for a spatial graph were first defined in the 1950's by Kinoshita \cite{ki}.  Ishii and Yasuhara \cite{iy} introduced $p$-colorings of (Eulerian) spatial graphs.  McAtee, Silver and Williams \cite{msw} extended Ishii and Yasuhara's work to define a more general coloring theory, along with a topological interpretation in terms of the homology of cyclic branched covers, and related it to Litherland's Alexander polynomial for $\theta$-graphs \cite{lit}.

We will use Kinoshita's version of the Alexander module to define a theory of $p$-colorings of a spatial graph.  Along the way, we will define a sequence of {\em determinants} of a spatial graph, derived from the Alexander module.  For knots these determinants are the same as the sequence of Alexander polynomials (the classical knot determinant is simply the absolute value of the first Alexander polynomial at $-1$).  For spatial graphs, however, the determinants are stronger invariants than the Alexander polynomial.  We will show that these determinants provide the obstructions to finding $p$-colorings of spatial graphs, as the Alexander polynomials do for knots.  We will also show that $p$-colorings can be interpreted in terms of metacyclic representations of the fundamental group of the knot complement.

In section \ref{S:alexander}, we describe the Alexander module and the Alexander polynomials of a spatial graph (more precisely, a {\em balanced} spatial graph, where the directed sum of the edge weightings at each vertex is zero).  In section \ref{S:determinant} we introduce the determinants of a spatial graph, and define what it means for a spatial graph to be $p$-colorable.  We prove that the determinants provide obstructions to coloring the graph.  We also show that $p$-colorings can be interpreted as representations of the fundamental group of the graph complement into certain metacyclic groups, and count these representations.  In Section \ref{S:properties} we explore how the Alexander polynomial is affected by common operations such as reversing orientation, contracting an edge of the graph, and joining two graphs at a vertex.  Finally, we pose some questions for further research.

\section{Alexander polynomial for spatial graphs} \label{S:alexander}

In this section we will study the {\em Alexander module} and {\em Alexander polynomials} of a spatial graph (more specifically, a {\em balanced} spatial graph, as defined below).  These are essentially the same as the Alexander module and polynomials defined by Kinoshita \cite{ki}, though we will present them slightly differently.  

\subsection{Alexander module} \label{SS:module}

An {\em oriented spatial graph} $G$ is the image of an embedding of an abstract directed graph $\Gamma$ in $\R^3$.  A {\em diagram} for $G$ is a regular projection of $G$ to a plane in $\R^3$. We will consider oriented spatial graphs with a {\it weighting} $\omega$ which assigns to each edge of $G$ an integer weight. We say that a weighting $\omega$ is {\it balanced} if at each vertex, the sum of the weights of the edges directed into the vertex is equal to the sum of the weights of the edges directed out from the vertex.  A {\em balanced} spatial graph is a pair $(G, \omega)$ of an oriented spatial graph $G$ and a balanced weighting $\omega$.  A balanced weighting $\omega$ is {\em reduced} if the greatest common divisor of all the weights is 1; we will see that we can usually restrict our attention to reduced weightings.

Balanced weightings can be interpreted topologically.  Kinoshita \cite{ki} viewed a balanced spatial graph as a cycle in the module $C_1$ of 1-chains in $S^3$ over $\Z$.  Alternatively, we can view a weighting $\omega$ as a function from the meridian of each edge of the graph to $\Z$.  The meridians generate $H_1(S^3-G; \Z)$, the first homology of the graph exterior.  The relations in the first homology group are exactly the oriented sums of the generators for the edges incident to each vertex of $G$.  Since $\omega$ sends these sums to 0, it is an element of Hom$(H_1(S^3-G; \Z), \Z) \cong H^1(S^3-G; \Z)$, and hence can be interpreted as a cohomology class.

Every oriented graph has a trivial balanced weighting where every edge has weight 0.  McAtee, Silver and Williams \cite{msw} showed that if a graph has a specified spanning tree and any orientation on the edges outside the spanning tree, then the spanning tree can be oriented to allow a non-trivial balanced weighting where all weights are non-negative (i.e. all edges not in the spanning tree receive weight 1, and edges in the spanning tree are oriented and weighted so that the final result is balanced).  Given a balanced weighting of an oriented graph $G$, changing the orientation of an edge and the sign of its weight gives another balanced weighting of an oriented graph $G'$ (where $G'$ differs from $G$ in the orientation of one edge).  So McAtee, Silver and Williams' construction shows that every oriented graph has a nontrivial balanced weighting (though we may not be able to ensure that all the weights are non-negative, such as with a $\theta$-graph with all edges directed into the same vertex).

\begin{figure}[htbp]
\begin{center}
\scalebox{.8}{\includegraphics{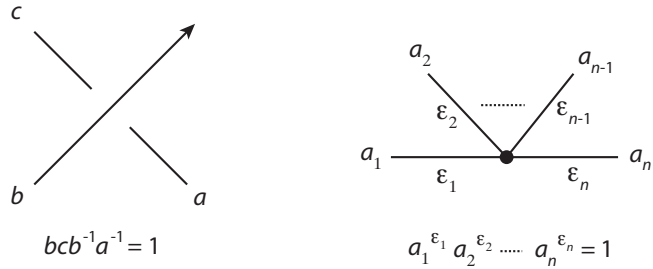}}
\end{center}
\caption{Wirtinger relations for $\pi_1(S^3 - G)$.}
\label{F:wirtinger}
\end{figure}

The fundamental group $\pi_1(S^3-G)$ has a Wirtinger presentation constructed from a diagram $D$, where the generators correspond to the arcs in the diagram.  The presentation has a relation at each crossing and vertex in the diagram, as shown in Figure \ref{F:wirtinger}.  At a vertex, the {\it local sign} $\e_i$ of arc $a_i$ is 1 if the arc is directed into the vertex, and -1 if the arc is directed out from the vertex.

Given a balanced spatial graph $(G, \omega)$, there is a well-defined {\em augmentation homomorphism} $\chi: \pi_1(S^3 - G) \rightarrow \Z = \langle t \vert \ \rangle$ that sends each Wirtinger generator to $t^w$, where $w$ is the weight of the edge containing that arc (the weighting must be balanced for the image of the relation at each vertex to be the identity).  We also let $\chi$ denote the linear extension of this map from the group ring $\Z[\pi_1(S^3 - G)]$ to $\Z[t^{\pm 1}]$.  If $\omega$ is a {\em reduced} weighting, then $\chi$ is an epimorphism.  We let $E = S^3 - G$; there exists a space $E_\chi$ and a covering map $p: E_\chi \rightarrow E$ with $p_*(\pi_1(E_\chi)) = {\rm Ker}(\chi)$ (i.e. $E_\chi$ is the  cyclic cover of $E$ corresponding to $\chi$).  If $G$ is a connected graph with vertex set $V$, the {\it Alexander module} over $\Z[t^{\pm 1}]$ for the pair $(G, \omega)$ is the relative homology module $A(G, \omega) = H_1(E_\chi, p^{-1}(V); \Z)$.  Then there is a $\Z[t^{\pm 1}]$-exact sequence (where $\langle t-1 \rangle$ is the ideal of $\Z[t^{\pm 1}]$ generated by $t-1$) (see \cite{fo2} and \cite[Thm. 15.3.4]{ka}):
$$0 \rightarrow H_1(E_\chi; \Z) \rightarrow A(G, \omega) \rightarrow \langle t-1 \rangle \rightarrow 0$$

Given a diagram $D$ for $G$, a presentation matrix for $A(G, \omega)$ is $\left[ \chi \left( \frac{\partial r_i}{\partial a_j} \right) \right]$, where $r_i$ and $a_j$ are the relations and generators of the Wirtinger presentation (from diagram $D$), and the derivative is Fox's free derivative from $\Z[\pi_1(S^3 - G)]$ to itself.  To compute the entries of this matrix (which we denote by $M(D, \omega)$), we recall the following rules for computing the Fox derivative \cite{fo} (this is not a complete description; just the rules we will need):
\begin{enumerate}
	\item $\dfrac{\partial a_i}{\partial a_j} = \left\{ \begin{matrix} 1\ {\rm if}\ i = j\\ 0\ {\rm if}\ i \neq j \end{matrix} \right.$,
	\item $\dfrac{\partial}{\partial a_j} (gh) = \dfrac{\partial}{\partial a_j}(g) + g \dfrac{\partial}{\partial a_j}(h)$, for $g, h \in \pi_1(S^3 - G)$,
	\item $\dfrac{\partial}{\partial a_j} (g^{-1}) = -g^{-1} \dfrac{\partial}{\partial a_j} (g)$ for $g \in \pi_1(S^3 - G)$.
\end{enumerate}

In the crossing relation $bcb^{-1}a^{-1}$ we will assume arc $b$ has weight $w_1$, and arcs $a$ and $c$ (on the same edge) have weight $w_2$.  In the vertex relation, arc $a_i$ has weight $w_i$.  Let $m_i = \e_1w_1 + \e_2w_2 + \cdots + \e_{i-1}w_{i-1} + \min\{\e_i, 0\}w_i$.
\begin{align*}
	\chi \left(\frac{\partial}{\partial b} (bcb^{-1}a^{-1})\right) &= \chi\left(1 + bc(-b^{-1})\right) = 1 - t^{w_1 + w_2 - w_1} = 1-t^{w_2} \\
	\chi \left(\frac{\partial}{\partial c} (bcb^{-1}a^{-1})\right) &= \chi (b) = t^{w_1} \\
	\chi \left(\frac{\partial}{\partial a} (bcb^{-1}a^{-1})\right) &= \chi\left(bcb^{-1}(-a^{-1})\right) = -t^{w_1 + w_2 - w_1 - w_2} = -1\\
	\chi \left(\frac{\partial}{\partial a_i} (a_1^{\e_1}a_2^{\e_2} \cdots a_n^{\e_n})\right) &= \left\{ \begin{matrix} \chi \left(a_1^{\e_1}\cdots a_{i-1}^{\e_{i-1}}\right) & {\rm if}\ \e_i = 1 \\ \chi \left(a_1^{\e_1}\cdots a_{i-1}^{\e_{i-1}}(-a_i^{-1})\right) &{\rm if}\ \e_i = -1 \end{matrix} \right\} = \e_i t^{m_i}
\end{align*}

So our linear versions of the Wirtinger relations in Figure \ref{F:wirtinger}, which we will call Alexander relations, are: \medskip
\begin{itemize}
	\item[] Crossing relation: $(1-t^{w_2})b + t^{w_1}c - a = 0$ and 
	\item[] Vertex relation: $\sum_{i=1}^n{\e_i t^{m_i}a_i}  = 0$.
\end{itemize}

\medskip
\begin{rem}
We conclude this section with a few remarks about the Alexander module and matrix. \begin{enumerate}
	\item While $M(D, \omega)$ depends on the diagram $D$, $A(G, \omega) = H_1(E_\chi, p^{-1}(V); \Z)$ depends only on the fundamental group of $S^3-G$, and the choice of weighting $\omega$.  This means the Alexander module is an invariant of {\em flexible} vertex isotopy -- the order of the edges around a vertex is not fixed.
	\item If $\omega$ is a balanced weighting which is {\em not} reduced, and the weights have greatest common divisor $g > 1$, we can still carry through the procedure above, except now $\chi$ maps $\pi_1(S^3 - G)$ onto $\Z = \langle t^g\vert \ \rangle$.  Let $\omega'$ be the reduced weighting found by dividing each weight in $\omega$ by $g$.  Then $M(D, \omega) = M(D, \omega')\vert_{t \mapsto t^g}$.  So we lose no information by replacing $\omega$ with $\omega'$, and we can generally restrict ourselves to reduced weightings.
	\item If we reverse the orientation of an edge, we replace any Wirtinger generator $a_i$ along that edge with $a_i^{-1}$.  If we also change the sign of the weight of the edge, then the image of $a_i^{-1}$ under $\chi$ is the same as the image of $a_i$ with the original weight.  So changing the orientation of an edge and reversing the sign of the weight of an edge have the same effect on the Alexander module and matrix, and doing both together leaves the Alexander module and matrix unchanged.
\end{enumerate}
\end{rem}

\subsection{Alexander polynomials} \label{SS:polynomial}

The Alexander polynomials are the greatest common divisors of the generators of the elementary ideals of the Alexander module; like the module itself, these are isotopy invariants of the pair $(G, \omega)$.  The polynomials can be computed using minors of the Alexander matrix, but are well defined only up to multiplication by units in $\Z[t^{\pm 1}]$ (i.e. by $\pm t^r$).  If a spatial graph diagram $D$ has $e$ edges, $v$ vertices and $c$ crossings, then $M(D, \omega)$ is a $(c+v) \x (c+e)$ matrix.

\begin{defn}
Given a matrix $M$ (not necessarily square) where all entries are integers (or polynomials with integer coefficients), we define $\det(M,k)$ to be the greatest common divisor of all the $k\times k$ minors of $M$.  (If $k \leq 0$, we set $\det(M,k) = 1$.)
\end{defn}

\begin{defn}
The {\em $k$th Alexander polynomial} of balanced spatial graph $(G, \omega)$ with diagram $D$ is $\Delta_k(G,\omega)(t) = \det(M(D, \omega), c+v-k)$, modulo multiplication by a factor of $\pm t^r$.
\end{defn}

It is well-known that any one of the Wirtinger relations can be written as a product of the others; hence any one row of the Alexander matrix $M(D, \omega)$ is a linear combination of the others (over $\Z[t^{\pm 1}]$).  So $\Delta_0(G, \omega)(t) = 0$ for any pair $(G, \omega)$.  It is clear from the definition that, if $k < l$, then $\Delta_l(G, \omega) \vert \Delta_k(G, \omega)$.  So the most interesting of the Alexander polynomials is usually $\Delta_1(G, \omega)$ -- this is what we mean if we ever refer to {\em the} Alexander polynomial of a balanced graph $(G, \omega)$.

\begin{figure}[htbp]
\begin{center}
\scalebox{.5}{\includegraphics{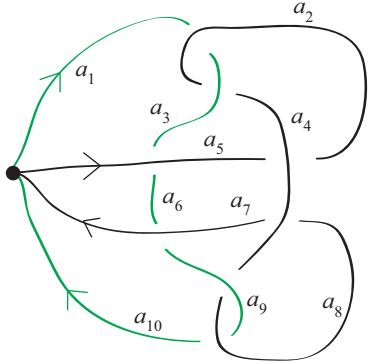}}
\end{center}
\caption{A spatial graph diagram $D$.}
\label{F:bouquet}
\end{figure}

\begin{example} \label{E:alexpoly}
Consider the spatial graph $(G, \omega)$ with diagram $D$ shown in Figure \ref{F:bouquet}, with the edges given weights $x$ and $y$ (the edge containing arc $a_1$ has weight $x$).  Then
$$M(D, \omega) = \left[ \begin{matrix} -1 & 1-t^x & t^y & 0 & 0 & 0 & 0 & 0 & 0 & 0 \\ 0 & -1 & 1-t^y & t^x & 0 & 0 & 0 & 0 & 0 & 0\\ 0 & 0 & t^y & 0 & 1-t^x & -1 & 0 & 0 & 0 & 0 \\ 0 & t^y & 0 & 1-t^y & -1 & 0 & 0 & 0 & 0 & 0\\ 0 & 0 & 0 & 0 & 0 & -1 & 1-t^x & 0 & t^y & 0 \\ 0 & 0 & 0 & 1-t^y & 0 & 0 & -1 & t^y & 0 & 0 \\ 0 & 0 & 0 & t^x & 0 & 0 & 0 & -1 & 1-t^y & 0\\ 0 & 0 & 0 & 0 & 0 & 0 & 0 & 1-t^x & t^y & -1 \\ -t^{-x} & 0 & 0 & 0 & -t^{-x-y} & 0 & t^{-x-y} & 0 & 0 & t^{-x} \end{matrix}\right]$$
If both edges are given weight 1, then $\Delta_1(G, \omega)(t) = t^2 - 2t + 2$ (normalized so that the lowest term is the constant term) and $\Delta_2(G, \omega)(t) = 1$ (so $\Delta_k(G, \omega) = 1$ for $k \geq 2$).
\end{example}

\section{Determinants and colorings of spatial graphs} \label{S:determinant}

\subsection{Determinants of spatial graphs}
The {\it determinant} of a knot is equal to the absolute value of the value of the Alexander polynomial at $-1$.  For knots, this is equivalent to replacing $t$ by $-1$ in the presentation matrix for the module and computing the determinant (after removing a row and column), since the matrix in this case is square.  However, for spatial graphs the results of these two computations can be different, since we are looking at the greatest common divisor of a set of minors.  So we will define determinants from the presentation matrix for the Alexander module, rather than the Alexander polynomial.

\begin{defn}
Let $n$ be an nonzero integer.  The {\em $k$th determinant at $n$} of a balanced spatial graph $(G, \omega)$ with diagram $D$ is $\det_k(G, \omega)(n) = \det(M(D, \omega)\vert_{t = n}, c+v-k)$, where $M(D, \omega)$ is the presentation matrix for the Alexander module, $c$ and $v$ are the numbers of crossings and vertices, and $0 \leq k \leq c+v-1$.
\end{defn}

\begin{rem} 
${}$
\begin{enumerate}
	\item When $n$ is prime (or $n = \pm 1$), the determinants at $n$ are the generators of the elementary ideals of the $\Z[1/n]$-module obtained from the Alexander module by setting $t = n$, and so do not depend on the specific diagram $D$ used.  They are well-defined up to multiplication by a power of $n$; typically, we will normalize them by factoring out the powers of $n$.
	\item For all $k$, $\det_{k+1}(G, \omega)(n) \vert \det_k(G, \omega)(n)$.
	\item $\Delta_k(G, \omega)(n) \vert \det_k(G, \omega)(n)$.  However, unlike for knots, they may not be equal (see Example \ref{E:alexdet}).
	\item We are particularly interested in the value of the determinant at $-1$, as in the knot determinant.  In this case, the absolute value of the determinant is well-defined, and the weights on the edges can be viewed as elements of $\Z_2$.
\end{enumerate}
\end{rem}

We can use the determinant to show that, as for knots, the value of the Alexander polynomial at $t = 1$ is 1.

\begin{thm} \label{T:det(1)}
Let $(G, \omega)$ be a balanced spatial graph.  Then $\det_1(G, \omega)(1) = 1$.
\end{thm}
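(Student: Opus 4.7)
My plan is to reduce the computation at $t=1$ to the classical matrix--tree fact for the signed vertex--edge incidence matrix of the underlying graph. Specifically, I would show that $M(D,\omega)|_{t=1}$ is equivalent, via integer unimodular row and column operations, to the block-diagonal matrix $\begin{bmatrix} L & 0 \\ 0 & B \end{bmatrix}$, where $L$ is a $c \times c$ integer matrix with $\det L = \pm 1$ and $B$ is the signed incidence matrix of the underlying directed graph $G$ (rows indexed by vertices, columns by edges, with $-1$ at the tail and $+1$ at the head of each edge). Unimodular equivalence preserves the GCD of $k \times k$ minors for every $k$, so the conclusion $\det_1(G,\omega)(1) = 1$ will then follow from the classical fact that when $G$ is connected the GCD of the $(v-1) \times (v-1)$ minors of $B$ equals $1$: any spanning tree of $G$ picks out a $(v-1) \times (v-1)$ submatrix of $B$ with determinant $\pm 1$ (a standard leaf-induction, or the matrix--tree theorem).

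The first step is the structural observation that at $t = 1$ the Alexander relations collapse dramatically. The crossing relation $(1-t^{w_2})b + t^{w_1}c - a = 0$ becomes $-a + c = 0$, independent of the over-arc and of the weights, and the vertex relation $\sum_i \e_i t^{m_i} a_i = 0$ becomes $\sum_i \e_i a_i = 0$. Hence each crossing row merely identifies two consecutive under-arcs on a single edge, and each vertex row is the signed sum of the arcs incident to that vertex. For an edge carrying $k$ under-crossings, its $k+1$ arcs $\alpha_0, \alpha_1, \ldots, \alpha_k$ are chained by $k$ crossing rows of the form $-\alpha_{i-1} + \alpha_i$, i.e.\ the incidence matrix of a path graph.

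Finally, I would partition the arc columns into ``representative'' arcs (one per edge, chosen to be the arc at the tail vertex) and ``non-representative'' arcs (everything else). In this order the $c$ crossing rows form a block $\begin{bmatrix} L & R \end{bmatrix}$ in which $L$ is block-diagonal, each block being lower bidiagonal with $1$s on the diagonal and $-1$s on the subdiagonal; so $\det L = \pm 1$ and $L^{-1}$ has integer entries. Using the crossing rows to clear the non-representative columns from the vertex rows, and then a dual column operation to clear $R$, are integer unimodular and produce the claimed block-diagonal form. The main step requiring care is identifying the resulting lower-right block $V_2 - V_1 L^{-1} R$ with $B$: edge by edge, $L^{-1}$ sends the column $(-1, 0, \ldots, 0)^T$ of $R$ to $(-1, -1, \ldots, -1)^T$, so $V_1 L^{-1} R$ contributes $-1$ in the head-vertex row of the edge's column (from $V_1$'s $+1$ at the incoming last arc); subtracting this from $V_2$'s $-1$ in the tail-vertex row reproduces the $-1/+1$ entries of $B$. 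Loops and crossing-free edges are handled by the same calculation. Connectedness of $G$, assumed in the definition of the Alexander module, then supplies the spanning tree that finishes the matrix--tree argument.
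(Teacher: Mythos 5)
Your proposal is correct, but it takes a genuinely different route from the paper. The paper's proof is a three-line topological observation: at $t=1$ the crossing relation collapses to $a=c$ and the vertex relation to $\sum \e_i a_i = 0$, and these are precisely the (abelianized Wirtinger) relations presenting $H_1(S^3-G;\Z)$, which is free abelian of rank $e-v+1$; a presentation matrix of a free module has trivial elementary divisors, so the gcd of the $(c+v-1)\times(c+v-1)$ minors is $1$. You prove the same collapse of the relations, but then, instead of invoking the structure of $H_1(S^3-G;\Z)$, you carry out an explicit unimodular reduction: the crossing rows chain the arcs of each edge into a path, the bidiagonal block $L$ is unimodular, and the Schur complement $V_2 - V_1L^{-1}R$ is the signed vertex--edge incidence matrix $B$, whose $(v-1)\times(v-1)$ minors have gcd $1$ by the spanning-tree determinant fact (your sign bookkeeping for $L^{-1}R$ and for loops checks out, and loops contributing zero columns is harmless since a spanning tree avoids them). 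The two arguments are of course cousins --- the incidence-matrix fact is essentially the statement that the relevant homology is free --- but yours is self-contained linear algebra over $\Z$ and makes the Smith-normal-form structure of $M(D,\omega)\vert_{t=1}$ explicit, at the cost of being considerably longer; the paper's argument is shorter but leans on knowing that $H_1$ of a connected graph complement in $S^3$ is free abelian. One small point worth making explicit in your write-up: after reaching the block form $\mathrm{diag}(L,B)$, you should cite the analogue of Lemma \ref{L:blockmatrix} to conclude that the gcd of the $(c+v-1)$-minors equals $\det(L,c)\cdot\det(B,v-1)=1$ (up to taking a gcd over decompositions, one of whose terms is $1$).
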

\begin{proof}
When we set $t= 1$, the relations derived from the Wirtinger relations in Figure \ref{F:wirtinger} become:
\begin{align*}
	(1-t^{w_2})b + t^{w_1}c - a = 0 &\longrightarrow a = c \\
	\sum_{i=1}^n{\e_i t^{m_i}a_i}  = 0 &\longrightarrow \sum_{i=1}^n{\e_i a_i}  = 0
\end{align*}
These are exactly the relations for the first homology group $H_1(S^3-G; \Z)$, which is a free abelian group.  Hence the elementary divisors are trivial, so $\det_1(G, \omega)(1) = 1$.
\end{proof}

\begin{cor} \label{C:alex(1)}
For any balanced spatial graph $(G, \omega)$, $\Delta_1(G, \omega)(1) = 1$, so the sum of the coefficients is 1.
\end{cor}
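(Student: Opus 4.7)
The plan is to deduce this corollary directly from Theorem \ref{T:det(1)} together with the divisibility relation between the Alexander polynomials and the determinants noted in Remark~(3) of Section~\ref{S:determinant}. Specifically, the third item of that remark states that $\Delta_k(G,\omega)(n) \mid \det_k(G,\omega)(n)$ for all $k$ and all nonzero $n$, so in particular $\Delta_1(G,\omega)(1) \mid \det_1(G,\omega)(1)$.

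First I would apply Theorem \ref{T:det(1)}, which gives $\det_1(G,\omega)(1) = 1$. Combining this with the divisibility above yields $\Delta_1(G,\omega)(1) \mid 1$ in $\Z$, so that $\Delta_1(G,\omega)(1) = \pm 1$. Since $\Delta_1(G,\omega)$ is only defined up to multiplication by a unit $\pm t^r$ in $\Z[t^{\pm 1}]$, we are free to normalize by the sign so that $\Delta_1(G,\omega)(1) = 1$. This settles the first claim.

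For the second claim, I would simply observe that for any Laurent polynomial $f(t) = \sum_i c_i t^i \in \Z[t^{\pm 1}]$, evaluation at $t = 1$ gives $f(1) = \sum_i c_i$, the sum of the coefficients. Applying this to $f = \Delta_1(G,\omega)$ gives the desired conclusion.

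There is essentially no obstacle here; the work has all been done in Theorem \ref{T:det(1)} and in the definition of the Alexander polynomials via minors of $M(D,\omega)$. The only subtlety worth flagging in the write-up is the sign ambiguity: without the normalization convention, one only obtains $\Delta_1(G,\omega)(1) = \pm 1$, and the statement as written implicitly chooses the representative of the equivalence class for which the value is $+1$.
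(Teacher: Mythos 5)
Your proof is correct and follows the same route the paper intends: the corollary is stated immediately after Theorem \ref{T:det(1)} precisely because $\Delta_1(G,\omega)(1)$ divides $\det_1(G,\omega)(1) = 1$, forcing the value to be a unit, which the normalization convention fixes as $+1$. Your additional remark about the sign ambiguity is a reasonable clarification of a point the paper leaves implicit.
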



\begin{figure}[htbp]
\begin{center}
\scalebox{.5}{\includegraphics{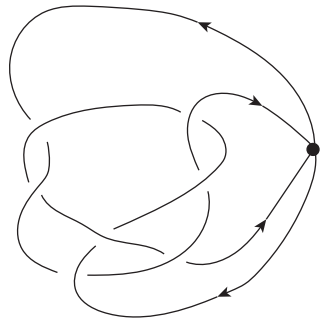}}
\end{center}
\caption{A graph $G$ for which $\Delta_1(G, \omega)(-1) \neq \det_1(G, \omega)(-1)$.}
\label{F:seven21b}
\end{figure}

\begin{example} \label{E:alexdet}
The graph $G$ in Figure \ref{F:seven21b} has $\Delta_1(G, \omega)(t) = 1$ for every choice of weighting.  However, when both edges receive weight 1, $\det_1(G, \o)(-1) = 7$, showing the graph is non-trivial.
\end{example}

\subsection{Colorings of spatial graphs} \label{SS:color}

Now we turn to colorings of spatial graphs by elements of $\Z_p$ for a prime $p$.  Given a balanced spatial graph $(G, \omega)$ with diagram $D$, the {\em coloring matrix at $n$} is
$$C(D, \omega, n) = M(D, \omega)\vert_{t = n}$$

Suppose $D$ has $c$ crossings, $e$ edges and $v$ vertices.  Given an odd prime $p$, a {\em $p$-coloring} of $D$ is an assignment of an element of $\Z_p$ to each arc (i.e. a vector in $\Z_p^{c+e}$) which is in the nullspace of $C(D, \omega, n)$ (modulo $p$).  We are interested in the number of linearly independent $p$-colorings for each $p$, so we define $N_p(G, \omega, n)$ to be the nullity of $C(D, \omega, n)$ over $\Z_p$ (this does not depend on the choice of diagram $D$).  Since $C(D, \omega, n)$ is a $(c+v) \x (c+e)$ matrix, and we know that any row is a linear combination of the others over $\Z$ (and hence over any $\Z_p$), the rank-nullity theorem tells us that $N_p(G, \omega, n) \geq e-v+1$.  We want to determine all the values of $p$ for which $N_p(G, \omega, n) > e-v+1$. We will do this using the determinants of the graph.  Our proof makes use of Lemma \ref{L:zeros}, which is simply a special case of the general fact that, in $\R^m$, an $(n+1)$-dimensional subspace and an $(m-n)$-dimensional subspace must intersect in at least a line.

\begin{lem}\label{L:zeros}
Let $S$ be a subspace of $\R^m$, where $m \geq n+1$.  $S$ contains $n+1$ linearly independent vectors $\vec v_1, \dots , \vec v_{n+1}$ if and only if for every choice of $1 \leq j_1 < j_2 <  \dots <  j_n \leq m$ there exists a non-trivial vector $\vec x$ in $S$ with $x_{j_1} = x_{j_2} = \cdots = x_{j_n} = 0$.
\end{lem}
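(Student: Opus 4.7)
The plan is to prove both directions by a straightforward dimension count using projection onto the $n$ specified coordinates. For the forward direction, assume $S$ contains $n+1$ linearly independent vectors, so $\dim S \geq n+1$. Given any $1 \leq j_1 < \cdots < j_n \leq m$, let $\pi \colon \R^m \to \R^n$ be the projection onto coordinates $j_1, \dots, j_n$. Applying rank--nullity to the restriction $\pi|_S$ yields $\dim \ker(\pi|_S) \geq \dim S - n \geq 1$, which furnishes the required nontrivial $\vec x \in S$ with $x_{j_1} = \cdots = x_{j_n} = 0$.

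For the reverse direction I would prove the contrapositive: if $\dim S \leq n$, then some choice of $n$ coordinates admits no nontrivial vector in $S$ vanishing on all of them. Set $k = \dim S \leq n$ and pick a basis $u_1, \dots, u_k$ of $S$; the $k \times m$ matrix $A$ whose rows are these basis vectors has rank $k$, so there exist column indices $i_1 < \cdots < i_k$ for which the corresponding $k \times k$ submatrix of $A$ is invertible. Writing $\vec v = \sum c_\ell u_\ell \in S$, the tuple $(v_{i_1}, \dots, v_{i_k})$ depends invertibly on $(c_1, \dots, c_k)$, so the projection $\vec v \mapsto (v_{i_1}, \dots, v_{i_k})$ restricted to $S$ is injective. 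Because $k \leq n$ and $m \geq n+1$, we may extend $\{i_1, \dots, i_k\}$ to a set $\{j_1 < \cdots < j_n\}$ of exactly $n$ coordinates; then any $\vec v \in S$ with $v_{j_1} = \cdots = v_{j_n} = 0$ in particular vanishes on the coordinates $i_1, \dots, i_k$, forcing $\vec v = \vec 0$.

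There is no real obstacle here: the lemma is essentially an immediate consequence of rank--nullity. The only mild subtlety is ensuring we have enough ``room'' to pad the $k$ distinguished coordinates out to $n$ coordinates when $k < n$, which is precisely why the hypothesis $m \geq n+1$ appears. The same argument goes through verbatim over any field (in particular over $\Z_p$), which is the setting in which the lemma will actually be applied to the coloring matrix $C(D, \omega, n)$.
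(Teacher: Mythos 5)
Your proof is correct. The paper does not actually write out a proof of this lemma; it only remarks (just before the statement) that the forward direction is a special case of the fact that an $(n+1)$-dimensional subspace and an $(m-n)$-dimensional subspace of $\R^m$ must meet in at least a line. Your forward direction is the same dimension count in different clothing: the kernel of $\pi|_S$ is exactly the intersection of $S$ with the $(m-n)$-dimensional coordinate subspace $\{x : x_{j_1} = \cdots = x_{j_n} = 0\}$, and rank--nullity gives $\dim S + (m-n) - m \geq 1$. Where you go beyond the paper is the converse: the paper never justifies it, and your contrapositive argument --- choosing $k = \dim S$ columns on which a basis matrix has an invertible $k \times k$ minor, so that the projection onto those coordinates is injective on $S$, then padding out to $n$ coordinates --- supplies the missing half cleanly. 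Your closing observations are also apt: the hypothesis $m \geq n+1$ is exactly what lets you pad the coordinate set, and the argument is field-independent, which matters since the lemma is applied over $\Z_p$ in the proof of Theorem \ref{T:detcolor}.
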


\begin{thm} \label{T:detcolor}
A diagram for a balanced spatial graph $(G, \omega)$ will have {\em more} than $e-v+k$ linearly independent $p$-colorings at $n$ (i.e. $N_p(G, \omega, n) > e-v+k$) if and only if $p \vert \det_k(G, \omega)(n)$.
\end{thm}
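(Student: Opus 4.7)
The plan is to translate the statement $N_p(G,\omega,n) > e-v+k$ into a condition about vanishing of minors of $C(D,\omega,n)$ mod $p$, and then recognize that condition as $p \mid \det_k(G,\omega)(n)$. Lemma \ref{L:zeros} (applied with $\Z_p$ in place of $\R$, since only linear algebra is used) will be the tool for the first step.

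First I would unpack the nullity condition. Set $m = c+e$ (number of columns of $C(D,\omega,n)$) and let $S \subseteq \Z_p^{m}$ be the nullspace mod $p$. We always have $\dim S \geq e-v+1$, and $N_p > e-v+k$ means $\dim S \geq e-v+k+1$. Applying Lemma \ref{L:zeros} with $n$ replaced by $e-v+k$, this is equivalent to saying that for every choice of indices $1 \leq j_1 < \cdots < j_{e-v+k} \leq m$, the nullspace $S$ contains a nonzero vector whose entries in positions $j_1, \ldots, j_{e-v+k}$ all vanish.

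Next I would reinterpret such a vector concretely. Fixing a choice of indices to kill, the remaining coordinates form $c+v-k$ positions; a nonzero nullspace vector vanishing on the killed coordinates is exactly a nontrivial element of the kernel (over $\Z_p$) of the $(c+v) \times (c+v-k)$ submatrix $C'$ of $C(D,\omega,n)$ consisting of the unkilled columns. By standard linear algebra, $C'$ has nontrivial kernel mod $p$ iff its rank is less than $c+v-k$, iff every $(c+v-k) \times (c+v-k)$ minor of $C'$ vanishes mod $p$.

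Finally I would assemble these equivalences. As the killed index set ranges over all $\binom{m}{e-v+k}$ choices, the column sets of $C'$ range over all collections of $c+v-k$ columns of $C(D,\omega,n)$, so the $(c+v-k)\times(c+v-k)$ minors of $C'$ range over all $(c+v-k)\times(c+v-k)$ minors of $C(D,\omega,n)$. Thus $N_p > e-v+k$ iff \emph{every} $(c+v-k)\times(c+v-k)$ minor of $C(D,\omega,n)$ vanishes mod $p$, which is iff $p$ divides their greatest common divisor $\det_k(G,\omega)(n)$. The main obstacle is bookkeeping — keeping straight which dimension is which (the matrix is not square, and nullity is counted over columns while minors are indexed by columns-to-keep) — but once Lemma \ref{L:zeros} is invoked the argument is purely formal.
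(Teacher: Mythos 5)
Your proposal is correct and follows essentially the same route as the paper's proof: invoke Lemma \ref{L:zeros} (over $\Z_p$) to convert the nullity condition into the existence of null vectors vanishing on any $e-v+k$ prescribed coordinates, then observe that this forces every $(c+v-k)\times(c+v-k)$ minor to vanish mod $p$, i.e. $p \mid \det_k(G,\omega)(n)$. Your explicit remark that the lemma transfers from $\R$ to $\Z_p$ is a small point of care the paper leaves implicit, but the argument is the same.
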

\begin{proof}
Suppose that a diagram $D$ for $G$ has $e-v+k+1$ linearly independent $p$-colorings at $n$.  In other words, the nullspace for the matrix $C(D, \omega, n)$ (modulo $p$) contains $e-v+k+1$  linearly independent vectors.  By Lemma \ref{L:zeros}, for every choice of $J = \{j_1, j_2, \dots, j_{e-v+k}\}$ there is a non-trivial vector $\vec x$ with $x_{j_i} \equiv 0 \pmod{p}$ for all $1 \leq i \leq {e-v+k}$ such that $C(D, \omega, n)\vec x = \vec 0 \pmod{p}$.  This means if we remove columns $j_1, \dots, j_{e-v+k}$ from $C(D, \omega, n)$, there is still a non-trivial vector in the nullspace (modulo $p$) of the resulting $(c+v) \times (c+v-k)$ matrix $C(D, \omega, n)_J$, and hence in the nullspace of every $(c+v-k) \times (c+v-k)$ submatrix of $C(D, \omega, n)_J$.  Hence $p$ divides the determinant of each of these submatrices (since each is singular modulo $p$).  Since this is true for every choice of $J$, $p$ divides every $(c+v-k) \times (c+v-k)$ minor of $C(D, \omega, n)$, and hence divides $\det_k(G, \omega)(n)$.

Since this argument is reversible, the converse is also true.
\end{proof}

\begin{example} \label{E:coloring}
Consider again the oriented spatial graph in Figure \ref{F:bouquet}.  If both edges have weight 1, we find that $\det_1(G, \omega)(-1) = 5$ and $\det_1(G, \omega)(5) = 17$; also, $\det_2(G, \omega)(-1) = \det_2(G, \omega)(5) = 1$.  Then Theorem \ref{T:detcolor} implies that $N_5(G, \omega, -1) = 3$ while $N_p(G, \omega, -1) = 2$ for all other primes $p$.  Similarly, $N_{17}(G, \omega, 5) = 3$ and $N_p(G, \omega, 5) = 2$ for all other primes $p$.
\end{example}


\begin{rem}
Ishii and Yasuhara \cite{iy} studied $p$-colorings of graphs where all vertices have even degree.  These graphs admit an Euler cycle that traverses every edge exactly once.  A choice of Euler cycle induces an orientation on the edges of the graph, such that the indegree and outdegree of each vertex are equal.  With such an orientation, we can define a balanced weighting by giving every edge weight 1.  If we then let $t = -1$, the coloring relations are exactly the relations studied by Ishii and Yasuhara.
\end{rem}

\subsection{Colorings as group representations}

It is well-known that a classical $p$-coloring of a knot (with $p$ prime) can be viewed as a representation of the knot group in the dihedral group $D_p$.  In this section we will show that a $p$-coloring at $k$ of a balanced spatial graph $(G, \omega)$ is a representation of $\pi_1(S^3-G)$ in a metacyclic group $\Gamma(p,m,k)$ (where $p$ is prime, $k$ is a positive integer, and $k^m \equiv 1 \pmod{p}$) with presentation
$$\Gamma(p,m,k) = \langle \a, \b \vert \a^p = \b^m = 1, \b\a\b^{-1} = \a^k\rangle$$
In particular, $G(p,2,-1) = D_p$.  Any element of $\Gamma(p,m,k)$ can be uniquely written as $\a^a \b^b$, with $0 \leq a \leq p-1$ and $0 \leq b \leq m-1$, so $\vert \Gamma(p,m,k)\vert = pm$.  Also, the relations in $\Gamma(p,m,k)$ imply
$$\a = \b^m \a \b^{-m} = \a^{k^m}.$$
This explains the requirement that $k^m \equiv 1 \pmod{p}$.  In other words, $m$ must be a multiple of the order of $k$ in the multiplicative group $\Z_p^*$ (denoted $\ord_p(k)$).  Also note that the final relation can be rewritten in several ways (here $1/k$ is the reciprocal of $k$ in the field $\Z_p^*$):
$$\b\a = \a^k\b \qquad \a\b^{-1} = \b^{-1}\a^k \qquad \a\b = \b\a^{1/k} \qquad \b^{-1}\a = \a^{1/k}\b^{-1}$$

Suppose $G$ is an oriented spatial graph with diagram $D$, from which we obtain a Wirtinger presentation for $\pi_1(S^3-G)$.  Any homomorphism (i.e. representation) $\phi: \pi_1(S^3 - G) \rightarrow \Gamma(p,m,k)$ is defined by $\phi(a) = \a^{\g(a)}\b^{\omega(a)}$ for each arc $a$ of $D$ (also a generator of the Wirtinger presentation), where $\g$ and $\omega$ are some maps from the generators of $\pi_1(S^3 - G)$ to elements of $\Z_p$ and $\Z_m$, respectively.

\begin{thm} \label{T:representation}
If $\phi: \pi_1(S^3 - G) \rightarrow \Gamma(p,m,k)$ is a homomorphism defined by $\phi(a) = \a^{\g(a)}\b^{\omega(a)}$ for each generator $a$ of the Wirtinger presentation of $\pi_1(S^3 - G)$, then $\omega$ is a balanced weighting (modulo $m$) of the edges of $G$, and $\g$ is a $p$-coloring of $(G, \omega)$ at $k$.  Conversely, given a balanced weighting $\omega$ of $G$ and $p$-coloring $\g$ of $(G, \omega)$ at $k$, $\phi(a) = \a^{\g(a)}\b^{\omega(a)}$ is a well-defined representation of $\pi_1(S^3-G)$ into $\Gamma(p,m,k)$.
\end{thm}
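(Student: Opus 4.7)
The plan is to exploit the universal property of the Wirtinger presentation: the assignment $\phi(a) = \a^{\g(a)}\b^{\o(a)}$ extends to a homomorphism $\pi_1(S^3-G) \to \Gamma(p,m,k)$ if and only if it respects each crossing relation and each vertex relation. Translating each Wirtinger relation into conditions on $\g$ and $\o$, I expect to recover exactly the Alexander relations (of Section~\ref{SS:module}) specialized at $t = k$ together with the balanced weighting condition modulo $m$; since every step will be reversible, the forward and converse directions of the theorem will fall out simultaneously. The computational workhorse is the single identity $\b^i\a^j = \a^{jk^i}\b^i$, which follows inductively from the defining relation $\b\a\b^{-1} = \a^k$ and extends to negative $i$ using the reciprocal $1/k$ in $\Z_p^*$. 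I will use it repeatedly to push every $\a$-factor to the left and every $\b$-factor to the right, putting each product into the normal form $\a^A\b^B$.

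For the crossing relation $bcb^{-1}a^{-1} = 1$ (with $b$ the overarc of weight $w_1$ and $a,c$ the underarcs of weight $w_2$), I will substitute $\phi(b) = \a^{\g(b)}\b^{\o(b)}$ and similarly for $a,c$, then apply the commutation identity to reduce $\phi(b)\phi(c)\phi(b)^{-1}$ to $\a^{\g(b) + \g(c)k^{\o(b)} - \g(b)k^{\o(c)}}\b^{\o(c)}$. Equating this with $\phi(a)$ and using uniqueness of the normal form in $\Gamma(p,m,k)$ will yield two conditions: $\o(a) \equiv \o(c) \pmod m$, so $\o$ is constant along the edge containing $a$ and $c$ (hence defines an edge-weighting), and $\g(a) \equiv (1-k^{w_2})\g(b) + k^{w_1}\g(c) \pmod p$, which is precisely the Alexander crossing relation at $t = k$.

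For a vertex relation $a_1^{\e_1}\cdots a_n^{\e_n} = 1$, I will first put each individual factor into normal form: $\phi(a_i)^{\e_i} = \a^{c_i}\b^{d_i}$ with $d_i = \e_i\o(a_i)$ and $c_i = \e_i\g(a_i)k^{\min\{\e_i,0\}\o(a_i)}$ (the extra $k^{-\o(a_i)}$ for $\e_i = -1$ arising when we push $\a^{-\g(a_i)}$ past $\b^{-\o(a_i)}$). Then I will compute the full product inductively using the commutation identity, obtaining
\[
\prod_{i=1}^n \a^{c_i}\b^{d_i} \;=\; \a^{\sum_{i=1}^n c_i k^{d_1 + \cdots + d_{i-1}}}\b^{\sum_{i=1}^n d_i}.
\]
Setting this equal to $1$ in $\Gamma(p,m,k)$ gives two independent conditions. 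The $\b$-exponent vanishing modulo $m$ reads $\sum_i \e_i\o(a_i) \equiv 0 \pmod m$, which is exactly the balanced weighting condition at that vertex. The $\a$-exponent vanishing modulo $p$, after substituting $d_1 + \cdots + d_{i-1} + \min\{\e_i,0\}\o(a_i) = m_i$ from the definition in Section~\ref{SS:module}, collapses to $\sum_i \e_i k^{m_i}\g(a_i) \equiv 0 \pmod p$, the vertex Alexander relation evaluated at $t = k$.

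Because each of the above reductions is an equivalence of conditions in $\Gamma(p,m,k)$, the two directions of the theorem are established at once: the Wirtinger relations hold under $\phi$ iff $\o$ satisfies the balancing condition mod $m$ at every vertex and $\g$ satisfies every Alexander crossing/vertex relation at $t = k$ mod $p$, which is precisely the definition of a $p$-coloring of $(G,\o)$ at $k$. I expect the main technical obstacle to be the bookkeeping in the vertex case: correctly tracking the running partial sum of $\b$-exponents as factors commute past one another, and verifying that the contribution $\min\{\e_i,0\}\o(a_i)$ from inverting negatively-oriented arcs combines with it to reproduce the exponent $m_i$ exactly as defined in the Alexander relation, including when $k$ is not a unit in $\Z$ (so that we must work inside $\Z_p^*$ throughout).
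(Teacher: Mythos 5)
Your proposal is correct and follows essentially the same route as the paper: put every image in the normal form $\a^A\b^B$ using the commutation identity $\b^i\a^j = \a^{jk^i}\b^i$, read off the $\b$-exponent condition as the mod-$m$ balancing of $\o$ and the $\a$-exponent condition as the Alexander crossing/vertex relations at $t=k$, with the identification $d_1+\cdots+d_{i-1}+\min\{\e_i,0\}\o(a_i)=m_i$ handling the vertex case exactly as in the paper. The reversibility observation likewise matches the paper's (brief) treatment of the converse.
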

\begin{proof}
We need to look at the images of the relations in $\pi_1(S^3-G)$ under the action of $\phi$.  We first consider the crossing relation $bcb^{-1}a^{-1} = 1$ from Figure \ref{F:wirtinger}.  The image of the crossing relation under $\phi$ is:
\begin{align*}
	\phi(bcb^{-1}a^{-1}) &= \a^{\g(b)}\b^{\o(b)}\a^{\g(c)}\b^{\o(c)}\b^{-\o(b)}\a^{-\g(b)}\b^{-\o(a)}\a^{-\g(a)} \\
	&= \a^{\g(b)+k^{\o(b)}\g(c)-k^{\o(c)}\g(b)-k^{\o(c)-\o(a)}\g(a)}\b^{\o(c)-\o(a)} 
\end{align*}

If $\f$ is a homomorphism, the image of the relation is trivial, so the exponents of $\a$ and $\b$ are both 0.  Then $\o(a) \equiv \o(c) \pmod{m}$ (and, more generally, this is true whenever $a$ and $c$ are arcs on the same edge).  So $\o$ is a mod-$m$ weighting of the edges of $G$.  Also, $\g(b)+k^{\o(b)}\g(c)-k^{\o(c)}\g(b)-k^{\o(c)-\o(a)}\g(a) = (1-k^{\o(a)})\g(b)+k^{\o(b)}\g(c)-\g(a) = 0$, which is the coloring relation at the crossing.

Now we consider a relation $\prod_{i=1}^n{a_i^{\e_i}} = 1$ (see Figure \ref{F:wirtinger}), where $\e_i$ is the local sign of arc $a_i$ at the vertex.  Suppose that $\o(a_i) = w_i$ and $\g(a_i) = x_i$.  The image of the vertex relation under $\phi$ is 
$$\phi\left(\prod_{i=1}^n{a_i^{\e_i}} \right) = \prod_{i=1}^n{(\a^{x_i}\b^{w_i})^{\e_i}}$$
Note that 
$$(\a^{x_i}\b^{w_i})^{-1} = \b^{-w_i}\a^{-x_i} = \a^{-x_i/k^{w_i}}\b^{-w_i} = \a^{-x_i k^{-w_i}}\b^{-w_i}$$
So we can write 
$$(\a^{x_i}\b^{w_i})^{\e_i} = \a^{\e_i x_i k^{\min(\e_i, 0) w_i}}\b^{\e_i w_i}$$
Also recall that $\b^{\e}\a = \a^{k^{\e}}\b$, where $\e = \pm 1$.  Then we have (where $m_i = \e_1w_1 + \cdots + \e_{i-1}w_{i-1} + \min(\e_i,0)w_i$)
\begin{align*}
	\phi\left(\prod_{i=1}^n{a_i^{\e_i}} \right) &= \prod_{i=1}^n{\a^{\e_i x_i k^{\min(\e_i, 0) w_i}}\b^{\e_i w_i}}\\
	&= \a^{\e_1 x_1 k^{\min(\e_1, 0) w_1}}\a^{\e_2 x_2 k^{\e_1 w_1 + \min(\e_2, 0) w_2}}\a^{\e_3 x_3 k^{\e_1 w_1 + \e_2 w_2 + \min(\e_3, 0) w_3}}\cdots \a^{\e_n x_n k^{m_n}} \b^{\sum{\e_i w_i}} \\
	&= \a^{\sum{\e_i x_i k^{m_i}}} \b^{\sum{\e_i w_i}} = 1
\end{align*}
This means that $\sum{\e_i w_i} \equiv 0 \pmod{m}$, so $\o$ is a {\em balanced} weighting of $G$ modulo $m$.  Also, $\sum_{i=1}^n{\e_i k^{m_i}x_i} \equiv 0 \pmod{p}$, so the coloring relations at the vertices are satisfied.  Hence $\o$ is a balanced weighting of $G$ (modulo $m$), and $\g$ is a $p$-coloring of $(G, \o)$ at $k$.

Conversely, if we start with a balanced weighting $\o$ and a coloring $\g$, the images of the relations in $\pi_1(S^3-G)$ under $\phi$ are trivial, 
so $\phi$ is a well-defined representation into $\Gamma(p,m,k)$.
\end{proof}

In particular, $p$-colorings at $k = -1$ correspond to representations into $D_p$, just as with knots.  Of course, some weightings and colorings correspond to representations {\em onto} $\Gamma(p,m,k)$, while others correspond to representations onto proper subgroups.  We would like to know which weightings and colorings correspond to surjective representations, and to count these representations.  Fox \cite{fo3} addressed this problem for metacyclic representations of knot groups; our goal here is to extend his result to spatial graphs.  

Consider a balanced spatial graph $(G, \omega)$.  If $\omega$ is not reduced, then the weights of $\o$ have a greatest common divisor $d$, and the representation corresponding to any $p$-coloring at $k$ is into the subgroup of $\Gamma(p,m,k)$ generated by $\a$ and $\b^d$.  Depending on whether $d$ and $m$ are relatively prime, this may or may not be a proper subgroup.  However, $(G, \omega)$ has a $p$-coloring at $k$ if and only if $(G, \o/d)$ has a $p$-coloring at $k^d$ (where $\o/d$ is the reduced weighting obtained by dividing every weight in $\o$ by $d$).  So it is reasonable to restrict our attention to reduced weightings.  In this case, our next lemma shows that if a representation into $\Gamma(p,m,k)$ is not surjective, it must be a representation onto a {\em cyclic} subgroup.

\begin{lem} \label{L:cyclic}
Suppose $\f: \pi_1(S^3 - G) \rightarrow \Gamma(p,m,k)$ is a non-trivial representation corresponding to a reduced balanced weighting $\o$ and a $p$-coloring of $(G, \o)$ at $k$.  Then $\f$ is either surjective, or the image is a cyclic subgroup of $\Gamma(p,m,k)$.
\end{lem}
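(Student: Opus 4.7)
The strategy is to exploit the short exact sequence
\[
1 \longrightarrow \langle \a \rangle \longrightarrow \Gamma(p,m,k) \longrightarrow \Z_m \longrightarrow 1
\]
and to analyze the image $H := \phi(\pi_1(S^3 - G))$ through its intersection with the normal subgroup $N := \langle \a \rangle$.

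First I would verify that $N$ is normal in $\Gamma(p,m,k)$: from $\b\a\b^{-1} = \a^k$ we get $\b^b \a^a \b^{-b} = \a^{a k^b} \in N$, so $N$ is stabilized by conjugation. The quotient $\Gamma(p,m,k)/N$ is cyclic of order $m$, generated by the image of $\b$; write $q$ for the quotient map.

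Next I would invoke the reducedness of $\omega$. For each Wirtinger generator $a$ the composition $q \circ \phi$ sends $a$ to $\o(a) \bmod m$, so $q(H)$ is the subgroup of $\Z_m$ generated by the residues $\{\o(a) \bmod m\}$. Since $\o$ is reduced, the integer weights have gcd $1$ over $\Z$, so by B\'ezout their mod-$m$ residues generate all of $\Z_m$. Hence $q(H) = \Z_m$, equivalently $HN = \Gamma(p,m,k)$.

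Finally, since $|N| = p$ is prime, $H \cap N$ is either trivial or all of $N$. If $H \cap N = N$, then $N \subseteq H$, so $H = HN = \Gamma(p,m,k)$ and $\phi$ is surjective. Otherwise $H \cap N = \{1\}$, so $q|_H \colon H \to \Z_m$ is injective, and $H$ embeds as a subgroup of a cyclic group, hence is cyclic.

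I do not foresee a substantive obstacle: once the structural fact (normal subgroup of prime order with cyclic quotient) is identified, the argument is a standard dichotomy. The only subtlety is to deploy the reducedness hypothesis correctly, namely to observe that gcd $1$ over $\Z$ forces the mod-$m$ residues to generate the \emph{full} quotient $\Z_m$ rather than some proper subgroup; this is exactly what rules out the intermediate case in which $H$ maps onto a proper cyclic subgroup of $\Z_m$ while still containing $\a$.
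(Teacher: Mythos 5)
Your proof is correct and is essentially the paper's argument repackaged in the language of the extension $1 \to \langle \a \rangle \to \Gamma(p,m,k) \to \Z_m \to 1$: the paper likewise uses reducedness to place some $\a^r\b$ in the image (your statement that $q(H) = \Z_m$), and then uses the primality of $p$ to conclude that a non-cyclic image must contain a nontrivial power of $\a$, hence $\a$ itself, hence $\b$, hence all of $\Gamma(p,m,k)$. Your dichotomy via $H \cap \langle\a\rangle$ being trivial or full is exactly that computation in structural form, so there is no gap.
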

\begin{proof}
Since the weighting is reduced, the image of $\f$ contains $\a^r \b$ for some $r$.  If the image is not the cyclic subgroup generated by $\a^r \b$, then for some $b$ and $x \neq y$, the image contains both $\a^x \b^b$ and $\a^y \b^b$.  But then it contains $\a^x \b^b \b^{-b} \a^{-y} = \a^{x-y} \neq 1$.  Since $\a$ has prime order, the image will then contain $\a$, and hence $\a^{-r} \a^r \b = \b$.  So $\f$ is surjective.  Hence, either $\f$ is surjective, or the image is a cyclic subgroup.
\end{proof}

We now wish to characterize, for a given reduced weighting, the colorings which correspond to cyclic representations into $\Gamma(p,m,k)$.  The image of such a representation is the subgroup generated by $\a^r \b$ for some $r$ ($0 \leq r \leq p-1$).  Observe that, for any $w$,
$$(\a^r \b)^w = \a^{r\left( 1 + k + k^2 + \cdots + k^{w-1}\right)} \b^w = \a^{r\left( \frac{1-k^w}{1-k} \right)} \b^w.$$
So any arc on an edge with weight $w$ has color $r\left( \frac{1-k^w}{1-k} \right)$ (in particular, all arcs on the same edge have the same color).  We need to verify that this coloring satisfies the coloring relations at the crossings and vertices.

Suppose arcs $a$, $b$ and $c$ meet at a crossing as in Figure \ref{F:wirtinger}, and that the edge containing arc $b$ has weight $w_1$ while the edge containing arcs $a$ and $c$ has weight $w_2$.  Using the colors above, we find that
\begin{align*}
(1-k^{w_2})r\left( \frac{1-k^{w_1}}{1-k} \right) &+ k^{w_1}r\left( \frac{1-k^{w_2}}{1-k} \right) - r\left( \frac{1-k^{w_2}}{1-k} \right) \\
&= \frac{r}{1-k} \left( (1-k^{w_1})(1-k^{w_2}) - (1-k^{w_1})(1-k^{w_2}) \right) = 0.
\end{align*}
So the coloring relation is satisfied at the crossing.

Now suppose arcs $a_1, \dots, a_n$ meet at a vertex, where arc $a_i$ is on an edge with weight $w_i$ and has local sign $\e_i$.  Recall that $m_i = \e_1w_1 + \e_2w_2 + \cdots + \e_{i-1}w_{i-1} + \min\{\e_i, 0\}w_i$. An easy induction shows that
$$\sum_{i=1}^l{\e_ik^{m_i}(1-k^{w_i})} = 1-k^{\sum_{i=1}^l{\e_iw_i}}.$$
In particular, since $\sum_{i=1}^n{\e_iw_i} = 0$, this means $\sum_{i=1}^n{\e_ik^{m_i}(1-k^{w_i})} = 0$.  Then
$$\sum_{i=1}^n{\e_i k^{m_i} r\left( \frac{1-k^{w_i}}{1-k} \right)} = \frac{r}{1-k}\sum_{i=1}^n{\e_ik^{m_i}(1-k^{w_i})} = 0$$
and the coloring relation is satisfied at the vertex.  Hence, for each choice of $r$, we get a coloring corresponding to a representation onto a cyclic subgroup of $\Gamma(p,m,k)$.  From section \ref{SS:color}, the number of linearly independent $p$-colorings at $k$ is $N_p(G,\o,k)$, so the total number of representations into $\Gamma(p,m,k)$ is $p^{N_p(G,\o,k)}$, of which $p$ are representations into a cyclic subgroup and the rest are surjective.

To count how many of these representations are inequivalent, we will make a further assumption that $m = {\rm ord}_p(k)$ (rather than just being a multiple).  Then any automorphism of $\Gamma(p,m,k)$ must send $\a$ to $\a^a$ for some $a$ between 1 and $p-1$ (since these are all the elements with order $p$), and must send $\b$ to $\a^b \b$ for some $b$ between $0$ and $p-1$ (since these are the elements which have order $m$ {\em and} satisfy $\b \a = \a^k \b$).  Note that, for any $\b^r$, $\b^r \a = \a^{k^r} \b^r$, and this is not the same as $\a^k \b^r$ unless ${\rm ord}_p(k) \vert (r-1)$; by taking $m = {\rm ord}_p(k)$, we avoid this possibility.  So each equivalence class of representations contains $p(p-1)$ elements.

We summarize this discussion with the following theorem, analogous to Theorem 1 of Fox \cite{fo3}.

\begin{thm} \label{T:countrep}
Given a spatial graph $G$ with a reduced balanced weighting $\o$, the number of inequivalent surjective representations from $\pi_1(S^3-G)$ onto $\Gamma(p, {\rm ord}_p(k), k)$ corresponding to $\o$ is 
$$\frac{p^{N_p(G,\o,k)} - p}{p(p-1)} = \frac{p^{N_p(G,\o,k)-1} - 1}{p-1}$$
\end{thm}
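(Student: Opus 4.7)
The plan is to treat the theorem as the bookkeeping that ties together Theorem~\ref{T:representation} and Lemma~\ref{L:cyclic}, together with a computation of $|{\rm Aut}(\Gamma(p,\ord_p(k),k))|$. By Theorem~\ref{T:representation}, with $\o$ fixed, the representations $\f:\pi_1(S^3-G)\to\Gamma(p,m,k)$ with $\f(a)=\a^{\g(a)}\b^{\o(a)}$ are in bijection with the $p$-colorings $\g$ of $(G,\o)$ at $k$; by the definition of $N_p(G,\o,k)$ in Section~\ref{SS:color}, there are exactly $p^{N_p(G,\o,k)}$ such representations.

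Next I would count the cyclic (non-surjective) ones. By Lemma~\ref{L:cyclic}, since $\o$ is reduced, every non-surjective representation has image contained in a cyclic subgroup generated by some $\a^r\b$. The computation $(\a^r\b)^w = \a^{r(1-k^w)/(1-k)}\b^w$ carried out in the paragraphs preceding the theorem gives, for each $r\in\{0,1,\dots,p-1\}$, an explicit coloring $\g_r(a)=r(1-k^{\o(a)})/(1-k)$, and the crossing and vertex verifications done there show this is actually a valid $p$-coloring. Distinct values of $r$ yield distinct representations (look at the image of any arc on an edge of weight $1$, which is $\a^r\b$), so there are precisely $p$ cyclic representations. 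Subtracting, the number of surjective representations for this $\o$ is $p^{N_p(G,\o,k)}-p$.

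To pass to inequivalence classes, I need the order of ${\rm Aut}(\Gamma(p,m,k))$ when $m=\ord_p(k)$. Any automorphism must send $\a$ to an element of order $p$, and the elements of order $p$ in $\Gamma(p,m,k)$ are exactly the nontrivial powers of $\a$, giving $p-1$ choices $\a\mapsto\a^a$. The image of $\b$ must have order $m$ and conjugate $\a^a$ to $(\a^a)^k$; writing an arbitrary element as $\a^b\b^r$, the constraint $(\a^b\b^r)\a^a(\a^b\b^r)^{-1}=\a^{ak^r}$ together with the requirement that this equal $\a^{ak}$ forces $k^r\equiv k\pmod p$, i.e.\ $\ord_p(k)\mid (r-1)$. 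Under the assumption $m=\ord_p(k)$, this pins $r$ down to $r=1$, so $\b\mapsto\a^b\b$ with $b\in\{0,\dots,p-1\}$, giving $p$ choices. Hence $|{\rm Aut}(\Gamma(p,\ord_p(k),k))|=p(p-1)$. (This is the one step where care is genuinely needed: the hypothesis $m=\ord_p(k)$ is exactly what rules out extra automorphisms $\b\mapsto\a^b\b^r$ with $r\neq 1$, and the main obstacle is verifying this constraint cleanly.) Since ${\rm Aut}(\Gamma(p,m,k))$ acts freely on the set of surjective representations (any automorphism fixing a surjective $\f$ is determined by its values on generators $\f(a)$, which span the image), each equivalence class has size $p(p-1)$, and dividing gives the stated formula.
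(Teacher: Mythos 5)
Your proposal is correct and follows essentially the same route as the paper: count all $p^{N_p(G,\o,k)}$ representations via Theorem \ref{T:representation}, identify the $p$ cyclic ones via Lemma \ref{L:cyclic} and the explicit colorings $\g_r$, and divide by the $p(p-1)$ automorphisms forced by $m = \ord_p(k)$ (the paper is in fact slightly less explicit than you are about why the automorphism group acts freely and why the $\g_r$ are distinct). One small caution: a reduced weighting need not contain an edge of weight $1$, so to see that distinct $r$ give distinct representations you should instead use that $\gcd$ of the weights being $1$ forces some edge weight $w$ with $k^w \not\equiv 1 \pmod p$ when $k \not\equiv 1$.
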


\begin{rem}
Representations of $\pi_1(S^3-G)$ onto the cyclic subgroup generated by $\a$ correspond to the trivial weighting where all weights are 0.  In this case, the crossing relations imply that the colors of two arcs on the same edge are the same.  The vertex relations become $\sum{\e_i c_i} = 0$, where $c_i$ are the colors of the arcs incident to the vertex.  So the possible colorings correspond to the possible balanced weightings of $G$, modulo $p$.
\end{rem}

\section{Properties of the Alexander polynomial} \label{S:properties}

In this section we will examine how the Alexander polynomial changes when we transform the graph in various ways.

\subsection{Reverses and Reflections} \label{SS:orientation}

We will first look at the effect on the Alexander polynomial of reversing the orientation of the graph, or of taking its mirror image.  Figure~\ref{F:reverse} shows the four possible orientations of the graph from Figure \ref{F:bouquet} (with both edges given weight 1) and their first Alexander polynomials; beneath each oriented graph is its mirror image (we imagine reflecting the diagram across a vertical line).  We can see that changing the orientation of a single edge of the graph can substantially change the polynomial.  However, if we reverse the orientation of {\em every} edge, or take the mirror image, we simply reverse the order of the coefficients.  Up to a factor of $\pm t^r$, this is the same as replacing $t$ with $t^{-1}$ in the polynomial.

\begin{figure}[htbp]
\begin{center}
\scalebox{.8}{\includegraphics{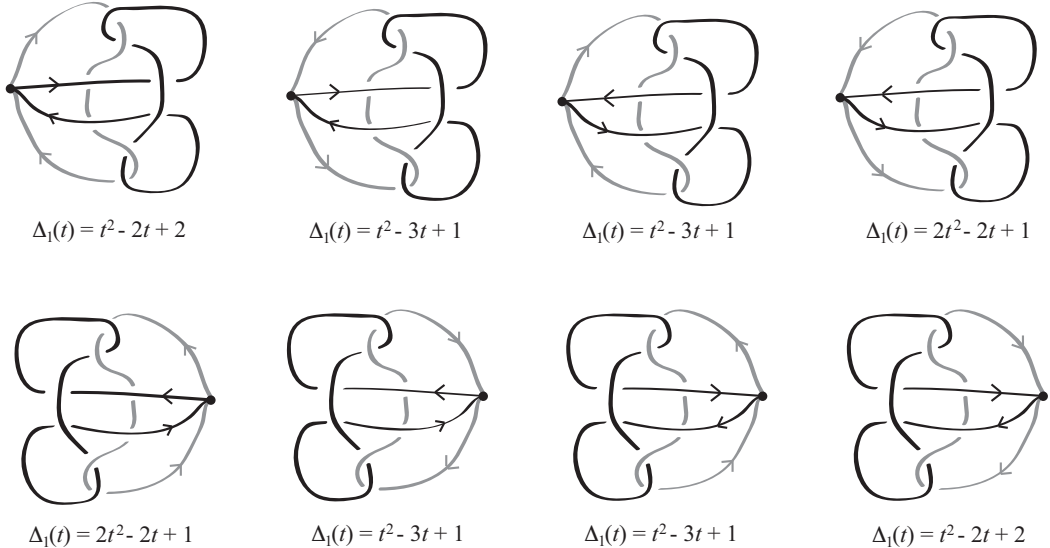}}
\end{center}
\caption{Orientations and mirror images.}
\label{F:reverse}
\end{figure}

\begin{thm}
For any balanced spatial graph $(G, \omega)$, let $(G^*, \omega)$ be the {\em inverse} of $G$ (i.e. the orientation of every edge is reversed), and let $(\overline{G}, \omega)$ be the mirror image (using the same weights as in $G$).  Then $\Delta_k(G^*, \omega)(t) = \Delta_k(\overline{G}, \omega)(t) = \Delta_k(G, \omega)(t^{-1})$.
\end{thm}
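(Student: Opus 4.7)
The plan is to show, for both $G' = G^*$ and $G' = \overline{G}$, that the Alexander module $A(G', \omega)$ is obtained from $A(G, \omega)$ by replacing the action of $t$ with the action of $t^{-1}$. Because the $k$-th Alexander polynomial is the gcd of a fixed family of minors of any presentation matrix (up to units), this will immediately yield $\Delta_k(G', \omega)(t) = \Delta_k(G, \omega)(t^{-1})$ modulo units in $\Z[t^{\pm 1}]$, which is exactly the claim.

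For the inverse $G^*$: the underlying subset of $\R^3$ is the same as $G$, so $S^3 - G = S^3 - G^*$ and their fundamental groups and vertex sets coincide. The Wirtinger generators of a diagram for $G^*$ are the oriented meridians of the reversed edges, which, as loops in $\pi_1(S^3 - G)$, are exactly the inverses of the corresponding Wirtinger generators for $G$. Under this identification, the augmentation $\chi_{G^*}$ sends each $G^*$-generator on an edge of weight $w$ to $t^w$, and hence sends each original $G$-generator to $t^{-w}$. Thus $\chi_{G^*}$ is simply $\chi_G$ composed with $t \mapsto t^{-1}$.

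For the mirror $\overline{G}$: the reflection $r: \R^3 \to \R^3$ restricts to a homeomorphism $E = S^3 - G \to \overline{E} = S^3 - \overline{G}$ that sends the vertex set $V$ bijectively to $\overline V$ and induces an isomorphism of pairs. Since $r$ reverses the orientation of $\R^3$, it sends each positively oriented meridian of $G$ to a negatively oriented meridian of $\overline G$ (the underlying loop is preserved, but the right-hand rule becomes the left-hand rule). Therefore $\chi_{\overline G} \circ r_* = \chi_G|_{t \mapsto t^{-1}}$, the same relationship as in the inverse case.

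In both cases we have an isomorphism of the group-plus-vertex data that intertwines the new graph's augmentation with $\chi_G|_{t \mapsto t^{-1}}$. Transporting the Alexander module construction through this isomorphism shows that a presentation matrix for $A(G', \omega)$ can be taken to be $M(D, \omega)|_{t \mapsto t^{-1}}$, so every $k \times k$ minor, and hence their gcd, is transformed by $t \mapsto t^{-1}$, giving the desired identity. The main technical obstacle is to carry out this transport of presentation matrices carefully: replacing Wirtinger generators by their inverses introduces factors of $t^{-w}$ and signs via Fox derivative rule (3), but these are units in $\Z[t^{\pm 1}]$ and so do not affect the gcd. An alternative, more computational route is to verify the substitution directly by applying $t \mapsto t^{-1}$ to the crossing and vertex Alexander relations derived in Section~\ref{S:alexander} and checking that, after multiplication by suitable monomial units, they coincide with the corresponding relations for $G^*$ or $\overline G$.
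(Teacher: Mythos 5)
Your proposal is correct, and for the mirror-image half it takes a genuinely different route from the paper. For the inverse $G^*$, your argument (reversed generators are the inverses of the old ones, so $\chi_{G^*}=\iota\circ\chi_G$ with $\iota\colon t\mapsto t^{-1}$) is essentially the paper's: the authors invoke their earlier remark that reversing every orientation is equivalent to negating every weight, which substitutes $t^{-1}$ for $t$ in the matrix. For the mirror image, however, the paper deliberately avoids a topological argument and instead compares the Alexander relations of $D$ and $\overline{D}$ crossing by crossing and vertex by vertex: it computes the reversed cumulative weights $\overline{m_i}=-m_i-w_i$ at each vertex, rescales each column $a_i$ by $t^{w_i}$, and checks that the crossing rows become unit multiples of the original rows with $t\mapsto t^{-1}$. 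You instead use the reflection $r$ of $S^3$ directly: since $r$ reverses linking numbers, it carries positively oriented meridians of $G$ to inverses of positively oriented meridians of $\overline{G}$, so $\chi_{\overline{G}}\circ r_*=\iota\circ\chi_G$; as $\iota$ is an automorphism of $\Z$, the kernels correspond, $r$ lifts to a homeomorphism of the pairs $(E_\chi,p^{-1}(V))$ intertwining the deck action of $t$ with that of $t^{-1}$, and the module is therefore the old one with its $\Z[t^{\pm1}]$-structure twisted by $\iota$. Your approach is cleaner and makes the unit ambiguities transparent (they are absorbed into the ring automorphism), at the cost of relying on the topological, diagram-independent definition of $A(G,\omega)$ and its elementary ideals, which the paper does set up in Section~\ref{SS:module}; the paper's combinatorial computation is more explicit and self-contained at the level of matrices. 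Either argument is complete; if you write yours up, make sure to state explicitly that the lifted homeomorphism respects the vertex preimages and that elementary ideals (hence their gcd's) transform by $t\mapsto t^{-1}$ under the twist.
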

\begin{proof}
As we remarked earlier, reversing the orientation of every edge is equivalent to changing the sign of every weight in $\omega$.  But this simply replaces $t$ in the Alexander matrix with $t^{-1}$, so $\Delta_k(G^*, \omega)(t) = \Delta_k(G, \omega)(t^{-1})$.

For knots, the result for mirror images is typically proved using the interpretation of the Alexander polynomial via Seifert surfaces.  Since spatial graphs do not bound surfaces in the same way, we take a more combinatorial approach.  Let $D$ be a diagram for $G$ with $v$ vertices and $c$ crossings, and let $\overline{D}$ be the mirror image of $D$.  We will compare the Alexander relations for $D$ and $\overline{D}$ at the vertices and crossings (we assume corresponding arcs of the two diagrams are given the same label).

At a vertex of $D$ the Alexander relation is $\sum_{i=1}^n{\e_i t^{m_i}a_i}  = 0$ where $m_i = \e_1w_1 + \e_2w_2 + \cdots + \e_{i-1}w_{i-1} + \min\{\e_i, 0\}w_i$.  In $\overline{D}$, the order of the arcs at each vertex has been reversed, but the signs are the same.  The cumulative weight of an arc at a vertex in $\overline{D}$ is $\overline{m_i} = \sum_{j=1}^n{\e_j w_j} - m_i  - \e_i w_i + 2\min\{\e_i, 0\}w_i = -m_i - w_i$ (since $\sum_{j=1}^n{\e_j w_j} = 0$). Then the vertex relation for $\overline{D}$ is:
$$\sum_{i = 1}^n{a_i\e_i t^{\overline{m_i}}} = \sum_{i = 1}^n{a_i\e_i t^{-m_i - w_i}} = \sum_{i = 1}^n{t^{-w_i}a_i\e_i (t^{-1})^{m_i}} = 0$$
If we multiply each column $a_i$ of the Alexander matrix $M(\overline{D}, \omega)$ by $t^{w_i}$, then each minor is changed by a factor of $\pm t^k$, and the vertex rows will then be the same as $M(D, \omega)$ with $t$ replaced by $t^{-1}$.

Now consider a crossing of $D$, where the relation is $(1-t^{w_2})b + t^{w_1}c - a = 0$ (arc $b$ has weight $w_1$ and arcs $a$ and $c$ both have weight $w_2$).  The relation at the corresponding crossing of $\overline{D}$ (reflecting across a vertical line) is $(1-t^{w_2})b - c + t^{w_1} d = 0$, since $a$ and $c$ have switched sides.  When each column of $M(\overline{D}, \omega)$ is multiplied by $t^{w_i}$ (to adjust the vertex relations, as described in the last paragraph), this relation becomes:
$$(t^{w_1} - t^{w_1+w_2})b - t^{w_2} c + t^{w_1 + w_2} a = -t^{w_1+w_2}((1-t^{-w_2})b + t^{-w_1}c - a) = 0$$
This is the same as the relation for the crossing in $D$, except with $t$ replaced by $t^{-1}$, and the relation multiplied by $-t^{w_1+w_2}$.  So we conclude that the minors of $M(\overline{D}, \omega)$ are the same as those of $M(D, \omega)$, except with $t$ replaced by $t^{-1}$, up to multiplication by a power of $t$.  Hence $\Delta_k(\overline{G}, \omega)(t) = \Delta_k(G, \omega)(t^{-1})$.
\end{proof}

\begin{rem}
Unlike for knots, where the coefficients of the Alexander polynomial are always palindromic, our weighted Alexander polynomial can sometimes distinguish an oriented spatial graph from its inverse and its mirror image.  For example, the oriented knot at the top left in Figure \ref{F:reverse} is both non-invertible and chiral.
\end{rem}

\subsection{Graph operations} \label{SS:operations}

Now we will explore how the Alexander polynomials of a spatial graph are affected by operations on the graph.  We will consider two common operations: contracting an edge in a graph, and joining two graphs at a vertex.

\begin{figure}[htbp]
\begin{center}
\scalebox{.8}{\includegraphics{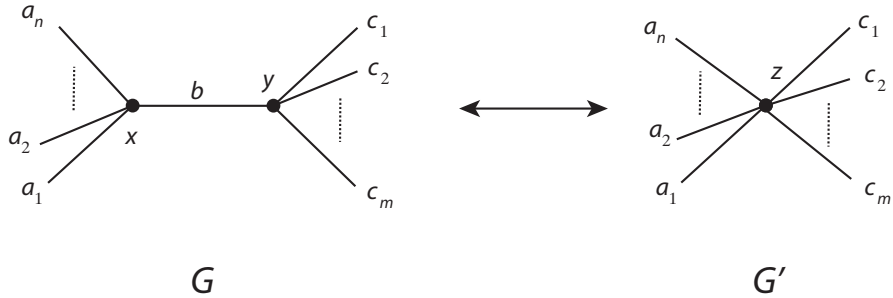}}
\end{center}
\caption{Contracting an edge.}
\label{F:contractedge}
\end{figure}

We first look at the effect of contracting an edge, as in Figure \ref{F:contractedge}.  We will show this does not change the Alexander modules, and hence does not change the Alexander polynomials.

\begin{thm}\label{T:alexcontract}
Let $(G, \omega)$ be a balanced spatial graph.  If $G'$ is the result of contracting an edge of $G$, as in Figure \ref{F:contractedge}, with the weighting $\omega'$ and orientation inherited from $G$, then $A(G', \omega') \cong A(G, \omega)$.  
\end{thm}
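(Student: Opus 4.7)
The plan is to reduce $M(D,\omega)$ to $M(D',\omega')$ by a single elementary row operation followed by deleting a generator--relation pair whose relation expresses the generator as $\pm 1$ times a linear combination of the others. Since the Alexander module is an isotopy invariant of $(G,\omega)$, I would first isotope $G$ so that the contracted edge $e$ appears in the chosen diagram $D$ as a short arc with no crossings. Write $u$ for the initial vertex of $e$ and $v$ for its terminal vertex, let $a_e$ be the unique arc of $D$ on $e$, and let $w_e$ be its weight. Let $D'$ be the contracted diagram, with merged vertex $uv$.

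Next I would choose orderings of the incident arcs at $u$ and at $v$ so that $a_e$ comes last in the cyclic ordering at $u$ and first at $v$. Under these choices, the balanced condition at $u$ (with $\e_e=-1$ there) forces the cumulative exponent $m_e^u$ to equal $0$, and trivially $m_e^v=0$. Since $e$ has no crossings, $a_e$ appears in no crossing relation. Hence the column of $M(D,\omega)$ indexed by $a_e$ has entry $-1$ in the row for $u$, entry $+1$ in the row for $v$, and $0$ in every other row.

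Now I would add the $v$-row to the $u$-row. This zeros the $a_e$-entry in the new $u$-row, and a short bookkeeping check shows that the resulting row is exactly the vertex relation at $uv$ in $M(D',\omega')$: arcs originally at $u$ retain their exponents $m_i^u$, while arcs originally at $v$ pick up an additional factor of $t^{w_e}$ from the row operation, and the identity $w_e=\sum_{i<e}\e_iw_i$ at $u$ (balancedness combined with the chosen ordering) is precisely the cumulative contribution to $m_j^{uv}$ from the $u$-side arcs under the induced cyclic ordering at the merged vertex of $D'$. After the row operation the $a_e$-column has a single nonzero entry $+1$, in the $v$-row, so deleting that row and column does not change the presented module and leaves exactly $M(D',\omega')$. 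This yields $A(G,\omega)\cong A(G',\omega')$.

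The main obstacle is the bookkeeping: verifying that the exponents produced by the row operation match the cumulative exponents $m_j^{uv}$ arising at the merged vertex of $D'$ under its induced cyclic ordering. The cyclic ambiguity in choosing a starting arc at $uv$ only multiplies the vertex relation by a unit, hence does not affect the presented module, but the exponent matching must be written out carefully. Once this is in hand, the invariance of the presented module under row operations and removal of a unit column delivers the desired isomorphism.
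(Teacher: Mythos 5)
Your argument is correct, but it takes a genuinely different route from the paper. The paper works upstream, at the level of fundamental groups and covering spaces: it observes that the complements of $G$ and $G'$ are homeomorphic, writes down an explicit isomorphism $f$ of Wirtinger presentations sending the generator $b$ of the contracted edge to $(c_1^{\f_1}\cdots c_m^{\f_m})^{1/\e}$, and uses balancedness to verify $\chi'\circ f=\chi$, so that the kernels (hence the induced covers, hence the relative homology modules) correspond. You instead work downstream on the presentation matrix, performing a Tietze-type reduction: one row addition followed by deletion of a unit column and its row. Both proofs hinge on balancedness at the same point --- for the paper it gives $\chi'(f(b))=t^w$, for you it forces $m_e^u=0$ so that the $a_e$-column entries are the units $\mp 1$. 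Your approach costs you two things the paper avoids: a preliminary ambient isotopy making the contracted edge crossing-free (which is legitimate --- shrink $e$ within a regular neighborhood, dragging $v$ and the germs of its other edges toward $u$ --- but should be stated as such, since a general diagram need not present $e$ without crossings), and the exponent bookkeeping at the merged vertex. On that bookkeeping, one phrase is loose: the $v$-side arcs do not ``pick up a factor of $t^{w_e}$ from the row operation'' (row addition multiplies nothing); rather, $w_e$ is already present in each $m_j^v$ because $a_e$ is first at $v$ with local sign $+1$, and the identity $w_e=\sum_{i<e}\e_i w_i$ is what matches it to the $u$-side contribution in $m_j^{uv}$ --- which is exactly the identity you cite, so the computation goes through. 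What your version buys is a self-contained, purely matrix-level proof requiring no covering-space theory; what the paper's version buys is independence from any choice of diagram and the slightly stronger conclusion that the cyclic covers themselves are identified.
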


\begin{proof}
The complements of $G$ and $G'$ are homeomorphic, so their fundamental groups are isomorphic.  We will show that their augmentation homomorphisms induce isomorphic covering spaces, and hence isomorphic Alexander modules.  The edges of $G'$ are given the same orientations and weights as the corresponding edges in $G$.  Suppose arc $a_i$ has weight $w_i$ and local sign $\e_i$, arc $c_i$ has weight $u_i$ and local sign $\f_i$, and arc $b$ has weight $w$ and local sign $\e$ at $x$ and $-\e$ at $y$.  

The Wirtinger presentations of their fundamental groups are almost identical.  We will also use $a_i$, $c_i$ and $b$ to refer to the generators corresponding to those arcs in $G$. We suppose the generators for $a_i$ and $c_i$ are chosen assuming the edges are oriented towards $x$ and $y$, respectively, and that the generator for $b$ is chosen assuming $b$ is oriented towards $x$. Then the presentation for the fundamental group of $G$ has two relations $a_1^{\e_1}\cdots a_n^{\e_n} b^\e = 1$ and $c_1^{\f_1}\cdots c_m^{\f_m} b^{-\e} = 1$; whereas $G'$ has only the single relation $a_1^{\e_1}\cdots a_n^{\e_n}c_1^{\f_1}\cdots c_m^{\f_m} = 1$ and does not include $b$ as a generator (all other relations and generators are the same).  An isomorphism $f$ from $\pi_1(S^3 - G)$ to $\pi_1(S^3-G')$ is defined by $f(a_i) = a_i$, $f(c_i) = c_i$, and $f(b) = (c_1^{\f_1}\cdots c_m^{\f_m})^{1/\e}$. We observe:
\begin{align*}
\chi(b) &= t^w \\
\chi'(f(b)) &= \chi'((c_1^{\f_1}\cdots c_m^{\f_m})^{1/\e}) = (t^{\f_1 u_1}\cdots t^{\f_m u_m})^{1/\e} = t^{\left(\sum{\f_i u_i}\right)/\e}
\end{align*}

\noindent Since the weighting on $G$ is balanced, we have:
$$-\e w + \sum_{i = 1}^m{\f_i u_i} = 0 \implies w = \frac{1}{\e}\left(\sum_{i = 1}^m{\f_i u_i}\right)$$
Hence $\chi' \circ f = \chi$.  Since $f$ is an isomorphism, ${\rm Ker}(\chi') \cong {\rm Ker}(\chi)$, and hence the induced covering spaces are isomorphic.  Therefore, $A(G', \omega') \cong A(G, \omega)$.
\end{proof}

\begin{cor} \label{C:contract}
If $G'$ is the result of contracting an edge of $G$, as in Theorem \ref{T:alexcontract}, then $\Delta_k(G', \omega')(t) = \Delta_k(G, \omega)(t)$. 
\end{cor}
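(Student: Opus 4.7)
The plan is to deduce this corollary almost immediately from Theorem \ref{T:alexcontract}. Recall from Subsection \ref{SS:polynomial} that $\Delta_k(G,\omega)(t)$ is defined to be the greatest common divisor of the $(c+v-k) \times (c+v-k)$ minors of a presentation matrix $M(D,\omega)$ for the Alexander module $A(G,\omega)$. What I want to emphasize, and what makes the argument essentially formal, is that these gcds are generators of the elementary (Fitting) ideals of $A(G,\omega)$ as a $\Z[t^{\pm 1}]$-module, and the elementary ideals are invariants of the module itself, independent of the chosen presentation.

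So the first step is to invoke Theorem \ref{T:alexcontract} to get $A(G',\omega') \cong A(G,\omega)$ as $\Z[t^{\pm 1}]$-modules. The second step is to note that if $M$ and $M'$ are any two finite presentation matrices of isomorphic $\Z[t^{\pm 1}]$-modules, then their $k$th elementary ideals $E_k(M)$ and $E_k(M')$ coincide (this is the standard fact that elementary ideals are module invariants; it follows from the fact that any two finite presentations are related by stabilizations and elementary row/column operations, each of which preserves all elementary ideals). Taking the gcd of the generators of $E_k$ in the UFD $\Z[t^{\pm 1}]$ then yields the same element up to units, i.e., up to $\pm t^r$.

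Applied to our situation, if $D$ is a diagram for $G$ and $D'$ is a diagram for $G'$, then $M(D,\omega)$ and $M(D',\omega')$ present isomorphic modules, so the gcds of their $(c+v-k)\times(c+v-k)$ and $(c'+v'-k)\times(c'+v'-k)$ minors agree up to units. That is precisely $\Delta_k(G',\omega')(t) = \Delta_k(G,\omega)(t)$ in the sense of equality modulo $\pm t^r$.

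The only conceivable obstacle is making sure the reader accepts the invariance of elementary ideals under change of presentation, but this is a well-known fact in the theory and is already implicit in the paper's earlier statement (Remark following the definition of the Alexander module) that $A(G,\omega)$ depends only on $\pi_1(S^3-G)$ and $\omega$, not on $D$; since $\Delta_k$ is extracted from $A(G,\omega)$, no further work is needed.
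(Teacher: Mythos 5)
Your argument is correct and is essentially the paper's own (implicit) reasoning: the corollary is stated without proof precisely because $\Delta_k$ is extracted from the elementary ideals of $A(G,\omega)$, which Theorem \ref{T:alexcontract} shows is unchanged by contraction. The only detail worth adding is that the paper's $\Delta_k$ uses $(c+v-k)\times(c+v-k)$ minors of a $(c+v)\times(c+e)$ matrix, so it corresponds to the $(e-v+k)$th elementary ideal in the standard indexing; since contracting an edge decreases $e$ and $v$ each by one, $e-v$ is preserved and the two gcds really do generate the \emph{same} elementary ideal of the common module.
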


As an application, we will analyze what happens to the Alexander polynomial when we replace each edge of a graph with $n$ parallel edges with the same weight (or, more generally, $k$ parallel edges with one orientation and $n-k$ parallel edges with the opposite orientation).

\begin{figure}[htbp]
\begin{center}
\scalebox{.8}{\includegraphics{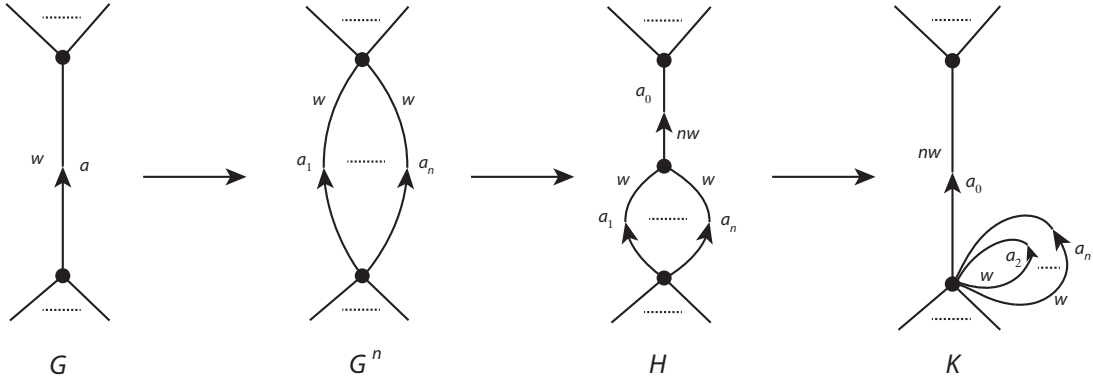}}
\end{center}
\caption{Replacing an edge with a ``bundle" of parallel edges.}
\label{F:parallel}
\end{figure}

\begin{thm} \label{T:parallel}
Let $(G, \omega)$ be a balanced spatial graph, and $(G^n, \omega^n)$ the result of replacing each edge with $n$ parallel copies of the edge, all with the same orientation and weight as the original edge (see Figure \ref{F:parallel}).  Then $\Delta_k(G^n, \omega^n)(t) = \Delta_k(G, \omega)(t^n)$.  If the edges are oriented so that $r$ have the same orientation as the original edge, and $n-r$ have the opposite orientation, denote the graph by $(G^n_r, \omega^n)$.  Then $\Delta_k(G^n_r, \omega^n)(t) = \Delta_k(G, \omega)(t^{r-(n-r)}) = \Delta_k(G, \omega)(t^{2r-n})$.
\end{thm}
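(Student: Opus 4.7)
The plan is to reduce the Alexander matrix $M(D^n_r, \omega^n)$ to the matrix $M(D, \omega)$ with $t$ replaced by $t^{2r-n}$, via unimodular row and column operations over $\mathbb{Z}[t^{\pm 1}]$. Since these operations preserve all elementary ideals, the identity of Alexander polynomials will follow. I first choose a diagram $D^n_r$ in which the $n$ parallel strands of each bundle are drawn adjacent to the corresponding edge of $D$ (with $r$ strands following the original orientation and $n-r$ reversed), so that each crossing of $D$ becomes an $n \times n$ grid of crossings in $D^n_r$. Using the invariance of the Alexander module under flexible vertex isotopy, I also arrange the cyclic ordering at each vertex so that the parallel arcs of each bundle appear in reverse order at its two endpoints.

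The crucial column operation is this: for each bundle, replace one of the $n$ arc-columns by an ``effective column'' $A_e = \sum_{k=1}^n c_k\, a^{(k)}$, a weighted sum of the parallel arcs whose coefficients $c_k$ are monomials in $t$ determined by the individual arc orientations and the common weight $w$, normalized by $c_1 = 1$ so the change of basis is unimodular. With the cyclic orderings arranged as above, a direct calculation shows that after this change of basis, each vertex relation contributes only to the column $A_e$, with coefficient matching the corresponding entry of $M(D, \omega)(t^{2r-n})$ up to a global unit factor which can then be absorbed by an additional scaling of the column $A_e$ by a suitable power of $t$.

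The dual row operation handles the crossing grids. At each $n \times n$ grid, iteratively applying the crossing relation $(1-t^{w_2})b + t^{w_1}c - a = 0$ expresses every intermediate under-arc sub-segment in terms of the incoming under-arc and the over-arcs, yielding $n$ effective relations $R_i$ (one per under-strand), each relating $a_i^{\mathrm{in}}$ and $a_i^{\mathrm{out}}$ modulo a weighted sum of the over-arcs that agrees with $A_{e_1}$ up to a unit. Taking the combination $\sum_i t^{\pm (i-1)w_2} R_i$ (with signs and exponents chosen to match the under-bundle's change of basis) converts the over-arc factor via the geometric-series identity $(1-t^{w_2})\sum_i t^{\pm (i-1)w_2} = \pm(1-t^{nw_2}) t^{\mathrm{unit\ power}}$, producing a single relation of the form $(1 - t^{nw_2}) A_{e_1} + t^{nw_1} A_{e_2}^{\mathrm{out}} - A_{e_2}^{\mathrm{in}} = 0$ that matches the crossing relation of $M(D, \omega)$ at $t \to t^n$; the mixed case, using the remark on orientation reversal, replaces $n$ by $2r-n$ in the exponents.

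The main obstacle is verifying the global consistency of these manipulations: one must show that after the column and row operations above, the $n-1$ auxiliary columns per bundle, the $n-1$ auxiliary row combinations per crossing grid, and the original crossing rows within bundles together form identity sub-blocks that can be cleared without disturbing the effective block corresponding to $M(D, \omega)(t^{2r-n})$. This requires coordinated bookkeeping of orientations, labelings, and scalings across all bundles and crossings, and a check that the auxiliary-row-by-auxiliary-column submatrix is invertible over $\mathbb{Z}[t^{\pm 1}]$. Granting this reduction, the reduced Alexander matrix has $M(D, \omega)(t^{2r-n})$ as its only non-trivial block, giving $\Delta_k(G^n_r, \omega^n)(t) = \Delta_k(G, \omega)(t^{2r-n})$; setting $r = n$ gives $\Delta_k(G^n, \omega^n)(t) = \Delta_k(G, \omega)(t^n)$.
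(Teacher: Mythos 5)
Your strategy---reducing $M(D^n_r,\omega^n)$ to $M(D,\omega)\vert_{t\mapsto t^{2r-n}}$ by explicit row and column operations---is a genuinely different route from the paper's, but as written it has a real gap: the essential step is deferred rather than proved. You say ``The main obstacle is verifying the global consistency of these manipulations'' and then proceed by ``Granting this reduction.'' That verification is the entire content of the argument on this route. Concretely, you must show that the $c(n^2-1)$ auxiliary row combinations coming from the crossing grids and the $n-1$ auxiliary columns per bundle assemble into a block that is invertible over $\Z[t^{\pm 1}]$ and can be cleared without altering the effective block, and you must track how the change of basis $a^{(1)}\mapsto A_e$ propagates into the crossing rows (each ``edge'' of a bundle is subdivided into many arcs by the $n\times n$ grids, so the vertex-level bookkeeping does not immediately control the crossing rows). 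The mixed-orientation case is particularly underspecified: when $r$ strands run one way and $n-r$ the other, the crossing relation for a reversed over- or under-strand interchanges the roles of $a$ and $c$ and introduces negative exponents, so the telescoping identity $(1-t^{w})\sum_i t^{(i-1)w}=1-t^{nw}$ does not apply directly; one must choose an ordering and coefficients for which the signed telescoping produces $1-t^{(2r-n)w}$, and this is asserted rather than checked.

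The paper sidesteps all of this bookkeeping by invoking Theorem \ref{T:alexcontract} (invariance of the Alexander module under edge contraction): expand the vertex at one end of each bundle by inserting a new edge of weight $nw$ (resp.\ $(2r-n)w$) so that contracting it recovers $G^n$; then contract one edge of the bundle, which turns the remaining $n-1$ parallel edges into small loops whose arcs contribute zero columns to the presentation matrix. What remains is $M(D,\omega)$ with every weight multiplied by $n$ (resp.\ $2r-n$), i.e.\ $t\mapsto t^n$. I would recommend either adopting that reduction or, if you want to keep the direct matrix approach, actually carrying out the invertibility and clearing argument for the auxiliary block, including the mixed-orientation telescoping.
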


\begin{proof}
We will prove the theorem for $G^n$; the proof for $G^n_r$ is the same.  For each ``bundle" of parallel edges in $G^n$, we can expand the vertex at one end of the bundle to obtain a new graph $H$, as shown in Figure \ref{F:parallel}.  We expand the vertex by adding a single edge, with $n$ times the weight of each edge in the bundle, so the graph is still balanced.  By Theorem \ref{T:alexcontract}, $G^n$ and $H$ have isomorphic Alexander modules.  We continue by contracting one of the edges in the bundle to get a graph $K$ (with the same Alexander module), as in Figure \ref{F:parallel}.  The other edges of the bundle are now small loops at one vertex, which we may assume involve no crossings.  Each of these arcs contributes a column of 0's to the presentation matrix for the Alexander module, and hence they do not change the Alexander polynomials.   Ignoring these loops, the presentation matrix for $K$ is the same as the matrix for $G$, except that each weight is multiplied by a factor of $n$.  This is equivalent to replacing $t$ with $t^n$ in the matrix for $G$, and so $\Delta_k(G^n, \omega^n)(t) = \Delta_k(G, \omega)(t^n)$.
\end{proof}

\begin{rem}
The proof of Theorem \ref{T:parallel} implies that, in any balanced spatial graph, an edge of weight $w$ can be replaced by $w$ parallel edges of weight 1 (and vice versa) without changing the Alexander polynomial.
\end{rem}

\begin{figure}[htbp]
\begin{center}
\scalebox{.7}{\includegraphics{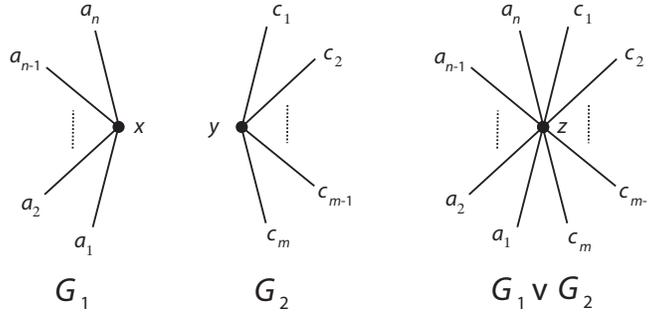}}
\end{center}
\caption{Joining two graphs at a vertex.}
\label{F:vertexjoin}
\end{figure}

Our next result looks at the result of taking the wedge product of two graphs, as shown in Figure~\ref{F:vertexjoin}.  In this case, we will show that the Alexander polynomial is, in a sense, multiplicative.  We will work with the presentation matrices for the Alexander modules; the following well-known facts from linear algebra will be useful (see, for example, \cite[Ex. 7.2.4]{ka}).  We will sketch the main idea of the proofs; the details are left to the reader.

\begin{lem}\label{L:operations}
Let $M$ be a matrix with entries in $\Z[t^{\pm 1}]$.  Then interchanging two rows (columns) or adding an multiple of a row (column) to another row (column) (where the scalars are in $\Z[t^{\pm 1}]$) do not change $\det(M, k)$.
\end{lem}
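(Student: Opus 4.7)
The plan is to show that under each of the listed elementary operations, every $k \times k$ minor of the new matrix is a $\Z[t^{\pm 1}]$-linear combination of the $k \times k$ minors of $M$. Because each operation is invertible by another operation of the same type (swap the rows again, or subtract off the multiple that was just added), this one-sided divisibility automatically upgrades to equality of the gcds, which is exactly the statement $\det(M,k) = \det(M',k)$ up to units. I would handle rows and columns separately, noting that the column case follows from the row case by transposing, so only the row operations really need to be written out.

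First I would dispense with the swap. If rows $i$ and $j$ are interchanged and one picks any $k$ rows and $k$ columns of the new matrix $M'$, then either neither of rows $i,j$ is among the chosen rows (in which case the submatrix is identical to the corresponding submatrix of $M$), or exactly one of them is chosen (the submatrix equals one coming from a different row-selection in $M$), or both are chosen (the submatrix is the one in $M$ with two of its rows swapped). In every case the minor of $M'$ equals $\pm$ a minor of $M$, so the multiset of minors is the same up to sign and the gcd is unchanged.

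Next I would treat the row addition: replace row $j$ of $M$ by row $j$ plus $c \cdot (\text{row } i)$, with $c \in \Z[t^{\pm 1}]$ and $i \neq j$. Fix a choice of $k$ rows and $k$ columns and let $N$ be the resulting $k \times k$ submatrix of $M'$. If row $j$ is not among the chosen rows, then $N$ is a submatrix of $M$ and nothing changes. If row $j$ is chosen but row $i$ is not, then multilinearity of the determinant in the $j$-th row of $N$ gives $\det N = \det N_0 + c \det N_1$, where $N_0$ is the corresponding submatrix of $M$ and $N_1$ is the submatrix obtained from $N_0$ by replacing its row coming from row $j$ of $M$ with the corresponding entries from row $i$ of $M$; after possibly reordering rows, $\det N_1$ is $\pm$ a $k \times k$ minor of $M$. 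If both rows $i$ and $j$ are chosen, the same multilinearity argument produces $\det N = \det N_0 + c \det N_1$, but now $N_1$ has two equal rows and hence $\det N_1 = 0$. In every case $\det N$ is a $\Z[t^{\pm 1}]$-linear combination of $k \times k$ minors of $M$, so $\det(M,k) \mid \det N$. Taking gcd over all choices yields $\det(M,k) \mid \det(M',k)$, and applying the same argument to the inverse operation gives the reverse divisibility.

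The main obstacle, such as it is, is purely bookkeeping: keeping track of the three cases for row-addition and verifying that the multilinear expansion stays inside the set of $k\times k$ minors of $M$ up to sign. There is no substantive arithmetic difficulty, since $\Z[t^{\pm 1}]$ is a commutative ring and the cofactor/multilinear expansion of a determinant is valid over any commutative ring. The column statements then follow by applying the row statements to $M^T$, using that the minors of $M^T$ are the same as those of $M$.
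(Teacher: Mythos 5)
Your proof is correct and follows essentially the same route as the paper's (which is only sketched there): swaps permute the minors up to sign, and a row addition replaces each affected minor $a$ by $a+cb$ for some other minor $b$ (or leaves it fixed when both rows are selected), so the gcd is preserved. Your added details --- the three-case analysis, the vanishing of the two-equal-rows term, and the use of invertibility to get divisibility in both directions --- are exactly the bookkeeping the paper leaves to the reader.
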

\begin{proof}
Interchanging two rows (columns) of the matrix only changes some minors by a sign, so does not affect the greatest common divisor.  Adding a multiple of a row (column) to another replaces some pairs of minors $\{a, b\}$ with pairs $\{a + nb, b\}$; again, the greatest common divisor of the minors is unchanged.
\end{proof}

\begin{lem}\label{L:blockmatrix}
Suppose that $M = \begin{pmatrix}
[M_1]&0&0&0 \\
0&[M_2]&0&0 \\
0&0&\ddots&0 \\
0&0&0&[M_n] \\
\end{pmatrix}$ ($M_1$, $M_2$ and $M_n$ do not have to be the same dimensions, or square).  Then 

\begin{center} $\det(M, k) = \gcd\{\det(M_1, k_1) \det(M_2, k_2) \dotsm \det(M_n, k_n) \vert k_1 + k_2 + \cdots + k_n = k\}$ \end{center}
In particular, if $M_i = [\pm 1]$ for $2 \leq i \leq n$, then $\det(M) = \det(M_1, k-(n-1))$.
\end{lem}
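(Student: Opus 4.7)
The plan is to exploit the block-diagonal structure by classifying which $k \times k$ submatrices of $M$ can possibly have nonzero determinant. First I would fix a choice of $k$ rows and $k$ columns of $M$ and let $r_i$ (respectively $c_i$) count how many lie in the rows (respectively columns) of $M_i$, so $\sum r_i = \sum c_i = k$. Because $M$ is block diagonal, the selected submatrix has zero off-diagonal blocks and its $(i,i)$ block is an $r_i \times c_i$ submatrix of $M_i$. If $r_i > c_i$ for some $i$, the $r_i$ chosen rows from block $i$ have nonzero entries only in the $c_i < r_i$ columns lying in block $i$, forcing a linear dependence and a vanishing determinant; symmetrically for $r_i < c_i$. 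Hence only the balanced case $r_i = c_i = k_i$ (with $\sum k_i = k$) can yield a nonzero minor, and there the determinant factors as $\pm \prod_i \det(N_i)$ with $N_i$ a $k_i \times k_i$ submatrix of $M_i$.

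Next I would compute the gcd. The set of nonzero $k \times k$ minors of $M$ is exactly the set of such products $\pm \prod_i \det(N_i)$ over all compositions $k_1 + \cdots + k_n = k$ and all choices of submatrices, together with zeros that are absorbed by the gcd. For each fixed composition, the $N_i$ vary independently, so the distributive identity
$$\gcd\{ a_1 a_2 \cdots a_n : a_i \in A_i \} = \prod_{i=1}^n \gcd(A_i),$$
valid over the UFD $\Z[t^{\pm 1}]$, reduces the gcd of products across that composition to $\prod_i \det(M_i, k_i)$. A final gcd over all compositions then yields the stated formula for $\det(M, k)$.

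For the ``in particular'' claim, with $M_i = [\pm 1]$ for $i \geq 2$, I would note that $\det(M_i, 0) = \det(M_i, 1) = 1$ while $\det(M_i, k_i) = 0$ for $k_i \geq 2$ (no such minors exist). Since zero summands do not affect a gcd, only compositions with $k_i \in \{0,1\}$ for $i \geq 2$ contribute. Letting $m$ be the number of indices $i \geq 2$ with $k_i = 1$, the product collapses to $\det(M_1, k - m)$ with $m$ ranging over $\{0, 1, \ldots, n-1\}$. The chain of divisibilities $\det(M_1, j) \mid \det(M_1, j+1)$ (the analog of the remark that $\det_{k+1} \mid \det_k$, proved by cofactor expansion of any $(j+1)$-minor along a row) implies $\det(M_1, k - (n-1))$ divides every $\det(M_1, k - m)$, so the gcd is precisely $\det(M_1, k - (n-1))$.

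The main obstacle I expect is the first step: confirming that every ``non-diagonal'' selection with $r_i \neq c_i$ really does produce a zero minor and that the remaining ``diagonal'' selections do not overcount. One has to argue carefully that a rectangular selected sub-block of $M_i$ with unequal numbers of rows and columns forces a dependency among the rows or columns of the full selected submatrix of $M$. Once this step is in hand, the gcd computation is essentially bookkeeping, modulo the distributive gcd identity over $\Z[t^{\pm 1}]$ and the standard cofactor-expansion divisibility.
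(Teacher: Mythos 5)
Your proposal is correct and follows essentially the same route as the paper, whose one-sentence sketch is exactly your first step: a $k\times k$ minor selecting unequal numbers of rows and columns from some block vanishes, so only products of square minors of the blocks survive. You also supply the details the paper leaves to the reader, namely the gcd identity $\gcd\{a_1\cdots a_n : a_i \in A_i\} = \prod_i \gcd(A_i)$ over the UFD $\Z[t^{\pm 1}]$ and the divisibility chain needed for the $M_i = [\pm 1]$ special case, both of which check out.
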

\begin{proof}
Any $k \x k$ minor of $M$ which contains more rows than columns (or vice versa) of a single block $M_i$ will have determinant 0, so we only need to consider minors constructed from square minors of each block.
\end{proof}

\begin{thm} \label{T:wedge}
If $(G_1, \omega_1)$ and $(G_2, \omega_2)$ are balanced spatial graphs, and $G = G_1 \vee G_2$ (i.e. the result of joining $G_1$ and $G_2$ at a vertex), with every edge inheriting its orientation and weighting from $G_1$ and $G_2$, then 
$$\Delta_k(G, \omega) = \gcd\{\Delta_{k_1}(G_1, \omega_1)\Delta_{k_2}(G_2, \omega_2) \vert k_1 + k_2 = k+1;\ k_1, k_2 \geq 1\}$$
In particular, $\Delta_1(G, \omega) = \Delta_1(G_1, \omega_1)\Delta_1(G_2, \omega_2)$.  
\end{thm}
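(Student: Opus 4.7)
The plan is to exhibit $M(D, \omega)$ as row-equivalent to a block-diagonal matrix whose top block is $M(D_1, \omega_1)$ and whose bottom block is $M(D_2, \omega_2)$ with a single redundant row deleted; Lemmas \ref{L:operations} and \ref{L:blockmatrix} then immediately yield a gcd-of-products formula, which translates into the stated identity after a change of index.

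To set this up, I would build a diagram $D$ for $G$ by placing $D_1$ and $D_2$ in disjoint half-planes and identifying the wedge vertex $v^*$; call $v_1^* \in G_1$ and $v_2^* \in G_2$ the vertices that are glued. Order the columns of $M(D, \omega)$ so that arcs of $D_1$ precede arcs of $D_2$, and order the rows as $D_1$-crossings, $D_2$-crossings, $D_1$-vertices other than $v_1^*$, $D_2$-vertices other than $v_2^*$, and finally the combined row at $v^*$. Then only the combined row has entries in both the $D_1$ and $D_2$ column groups. A direct Fox-calculus computation on the Wirtinger relation $a_1^{\e_1}\cdots a_n^{\e_n} b_1^{\f_1}\cdots b_\ell^{\f_\ell} = 1$ at $v^*$ (with the $a_i$ from $G_1$ and $b_j$ from $G_2$) shows, using $\sum_i \e_i w_i = 0$ from balancedness at $v_1^*$, that the combined row is exactly $(r_{v_1^*}, 0) + (0, r_{v_2^*})$, where $r_{v_i^*}$ denotes the $v_i^*$-row of $M(D_i, \omega_i)$.

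Next, the discussion in Section \ref{SS:polynomial} guarantees that $r_{v_2^*}$ is a $\Z[t^{\pm 1}]$-linear combination of the remaining rows of $M(D_2, \omega_2)$; since all those rows appear in $M(D, \omega)$ (with zeros in the $D_1$ columns), the same combination can be used to subtract off the $D_2$ component of the combined row. By Lemma \ref{L:operations} this preserves $\det(M(D, \omega), j)$ for every $j$, and what remains of the combined row is $(r_{v_1^*}, 0)$, which together with the other $D_1$-supported rows restores the full matrix $M(D_1, \omega_1)$. Thus $M(D, \omega)$ is row-equivalent to the block-diagonal matrix with blocks $M(D_1, \omega_1)$ and $M(D_2, \omega_2)'$, where the prime denotes deletion of the $v_2^*$-row. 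Lemma \ref{L:blockmatrix} now gives
\[ \det(M(D,\omega), K) = \gcd\{\det(M(D_1,\omega_1), k_1)\det(M(D_2,\omega_2)', k_2) : k_1 + k_2 = K\}. \]
A short multilinearity argument using the redundancy of $r_{v_2^*}$ shows $\det(M(D_2, \omega_2)', k_2) = \det(M(D_2, \omega_2), k_2)$: every $k_2$-minor that uses $r_{v_2^*}$ is a $\Z[t^{\pm 1}]$-combination of $k_2$-minors that do not. Setting $c = c_1 + c_2$, $v = v_1 + v_2 - 1$ and substituting $j_i = c_i + v_i - k_i$ converts $K = c + v - k$ into $j_1 + j_2 = k + 1$, yielding the claimed formula; terms with $j_i = 0$ are automatically $0$ and absorbed into the gcd, and the case $k = 1$ has the unique choice $j_1 = j_2 = 1$, giving the stated multiplicativity of $\Delta_1$.

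The hardest step is the identification of the combined $v^*$-row as $(r_{v_1^*}, 0) + (0, r_{v_2^*})$: everything after that is routine block-matrix and divisibility bookkeeping, but this identification depends precisely on the balancedness of $\omega$ at the wedge vertex to force the cumulative-weight transition factor between the $a$- and $b$-blocks of the Wirtinger word to equal $1$.
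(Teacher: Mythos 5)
Your proposal is correct and follows essentially the same route as the paper: reduce $M(D_1 \vee D_2, \omega)$ by row operations to a block-diagonal matrix whose blocks are $M(D_1,\omega_1)$ and $M(D_2,\omega_2)$ (up to one redundant vertex row), then apply Lemmas \ref{L:operations} and \ref{L:blockmatrix} and re-index. The only difference is that you justify the block structure by an explicit Fox-calculus computation of the wedge-vertex row (using balancedness to split it as $(r_{v_1^*},0)+(0,r_{v_2^*})$), whereas the paper invokes the free-product decomposition of $\pi_1(S^3-(G_1\vee G_2))$; your version simply makes that step more explicit and is equally valid.
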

\begin{proof}
We consider diagrams $D_1$ and $D_2$ for two graphs $G_1$, with vertex $x$, and $G_2$, with vertex $y$, as shown in Figure~\ref{F:vertexjoin}.  $G_1 \vee G_2$ is the result of joining these graphs at vertices $x$ and $y$, creating a new vertex $z$ and diagram $D_1 \vee D_2$, as shown.  We assume this is done so that there is a 3-ball $B \subset S^3$ with $\partial B \cap (G_1 \vee G_2) = \{z\}$, $G_1 - \{x\}$ is in the interior of $B$, and $G_2 - \{y\}$ is in the exterior of $B$.  In particular, there are no crossings between the edges of $G_1$ and the edges of $G_2$. If $D_i$ has $c_i$ crossings and $v_i$ vertices, then $D_1 \vee D_2$ has $c_1+c_2$ crossings and $v_1+v_2-1$ vertices.

In the Wirtinger presentation, any one relation is a consequence of the others, and can be deleted from the presentation.  If we delete the relations at vertex $x$ in $\pi_1(S^3 - G_1)$, at vertex $y$ in $\pi_1(S^3 - G_2)$ and at vertex $z$ in $\pi_1(S^3 - (G_1 \vee G_2))$, then we see that $\pi_1(S^3 - (G_1 \vee G_2))$ is the free product of $\pi_1(S^3 - G_1)$ and $\pi_1(S^3 - G_2)$.  Hence the Alexander module for $G_1 \vee G_2$ is the direct sum of the modules for $G_1$ and $G_2$.  So with the proper labeling, and using Lemma \ref{L:operations} to remove the redundant rows, $M(D_1 \vee D_2, \omega)$ is a block matrix whose blocks are $M(D_1, \omega_1)$ and $M(D_2, \omega_2)$.  Then by Lemma \ref{L:blockmatrix}:
\begin{align*}
	\Delta_k(G_1 \vee G_2, \omega) &= \det(M(D_1 \vee D_2, \omega), c_1 + c_2 + v_1 + v_2 -1 -k) \\
	&= \gcd\{\det(M(D_1, \omega_1), c_1 + v_1 - k_1)\det(M(D_2, \omega_2), c_2 + v_2 - k_2) \vert k_1 + k_2 = k+1;\ k_1, k_2 \geq 1\} \\
	&= \gcd\{\Delta_{k_1}(G_1, \omega_1)\Delta_{k_2}(G_2, \omega_2) \vert k_1 + k_2 = k+1;\ k_1, k_2 \geq 1\}
\end{align*}
  
In particular, if $k = 1$, then the only possible choice with $k_1 + k_2 = 2$ is $k_1 = k_2 = 1$. Hence $\Delta_1(G_1\vee G_2, \omega) = \Delta_1(G_1, \omega_1)\Delta_1(G_2, \omega_2)$.
\end{proof}

\section{Questions}

The Alexander polynomial for knots is well known to be {\em palindromic}, meaning that (modulo $\pm t^r$) it is symmetric in $t$ and $t^{-1}$.  Even more, this (and a few more minor conditions) characterizes the polynomials that can be realized as the Alexander polynomial of a knot.  However, the graphs in Figure \ref{F:reverse} show that the Alexander polynomial of a spatial graph may not be palindromic, so they are not characterized by the same conditions.  Corollary \ref{C:alex(1)} tells us that the sum of the coefficients must equal 1, but we do not know if this condition is sufficient to characterize the Alexander polynomials of balanced spatial graphs.

\begin{question}
Which polynomials can be realized as the Alexander polynomial of a balanced spatial graph?
\end{question}

The Alexander polynomial for a knot can also be transformed into the {\em Conway polynomial} by the substitution $z = t^{-1/2} - t^{1/2}$, in which form the coefficients are {\em finite type invariants} of the knot and the polynomial satisfies a nice skein relation.  This relation is useful for computing the polynomial and proving its properties.

\begin{question}
Is there a ``Conway normalization" for the Alexander polynomial of spatial graphs?  In other words, does the polynomial satisfy a nice skein relation?
\end{question}

\subsection*{Acknowledgements}
The authors are very grateful to Dan Silver for many helpful comments and discussions.

\small

\normalsize

\end{document}